\numberwithin{equation}{section}
\newcommand{\ignore}[1]{}
\newcommand{\C}{{\mathbb{C}}}
\newcommand{\R}{{\mathbb{R}}}
\newcommand{\N}{{\mathbb{N}}}
\newcommand{\D}{{\mathbb{D}}}
\newcommand{\T}{{\mathbb{T}}}
\newtheorem{thm}{Theorem}[section]
\newtheorem{prop}[thm]{Proposition}
\newtheorem{lemma}[thm]{Lemma}
\newtheorem*{problem}{Problem}
\theoremstyle{definition}
\newtheorem{defn}[thm]{Definition}
\theoremstyle{remark}
\theoremstyle{plain} 
\newcommand{\thistheoremname}{}
\newtheorem{genericthm}[thm]{\thistheoremname}
\author{Alan Noell}
\address{Department of Mathematics, Oklahoma State University,
Stillwater, OK 74078, USA}
\email{noell@math.okstate.edu}
\date{\today}
\title{Peak sets and boundary interpolation sets for the unit disc: a survey}
\keywords{Peak sets, boundary interpolation sets, Lipschitz classes}
\subjclass[2010]{30D50 (Primary),  30H05 (Secondary)}
\begin{document}

\def\s{\sigma}
\def\CC{{\mathbb C}^2}
\def\Cn{{\mathbb C}^n}
\def\L{\mbox{Lip }}
\def\b{\partial}
\def\B{{\mathcal B}}
\def\A{A_\ast}
\def\H{{\mathbb H}}
\def\BH{{\mathcal B}(\H)}

\begin{abstract}
This paper surveys results concerning
peak sets and boundary interpolation sets for the unit disc. It includes hitherto unpublished results  proved by David C. Ullrich
on peak sets for the Zygmund class.\end{abstract}

\dedicatory{To the memory of Thomas H. Wolff (1954--2000)}

\maketitle

\section{Introduction} \label{section:intro}

We denote the open unit disc in $\C$ by $\D$, and we denote its boundary by $\T$, the unit circle. If $E\subset \C$ is closed and $0< \alpha\leq 1$, we denote by $\Lambda_\alpha(E)$ the class of
functions $f$ on $E$ satisfying a Lipschitz condition of order $\alpha$ there: There exists a constant $C$ such that $|f(z)-f(w)|\leq C|z-w|^\alpha$ when $z,w\in E$.
If $0< \alpha\leq 1$ we denote by $A^\alpha$
the class of functions belonging to $\Lambda_\alpha(\overline{\D})$ and holomorphic on $\D$. We write $A^0$ or just $A$ for the disc algebra, the algebra of functions continuous on $\overline{\D}$ and holomorphic on $\D$. If $0\leq \alpha\leq 1$, we say that a closed subset $E$ of $\T$ is a peak
set for $A^\alpha$ if there exists a
function $f\in A^\alpha$ such that $f(z)=1$ when $z\in E$ and $|f(z)|<1$ when
$z\in \overline{\D}\setminus E$. We call $f$ a peak function. Clearly this
condition is equivalent to the existence of a strong
support function, that is, a function $g\in A^\alpha$ such that
$g(z)=0$ when $z\in E$ and $\mbox{Re }g(z)>0$ when $z\in\overline{\D}\setminus E$.
If $0\leq \alpha\leq 1$, we say that a closed subset $E$ of $\T$ is a boundary  interpolation
set for $A^\alpha$ if for each function $\phi\in \Lambda_\alpha(\T)$  there exists a
function $f\in A^\alpha$ such that $f(z)=\phi(z)$ when $z\in E$. (In the case $\alpha=0$ we require only that $\phi$ be continuous.)

In this paper we survey results concerning peak sets and boundary interpolation sets for $A^\alpha$. We also consider briefly related notions, such as zero sets and peak-interpolation sets, along with results for spaces having different regularity requirements at the boundary. Here is the major open problem.

\begin{problem}\label{question:pk} Characterize the peak sets for $A^\alpha$ when $0<\alpha <1$.
\end{problem}

We will see in Section \ref{section:lip} that there is no characterization in terms of a metric condition on the set, that is, a condition expressed in terms of the lengths of the arcs complementary to the set.

In Section \ref{section:prelim} we provide background information and explain why peak sets for $A^1$ are finite. In Section \ref{section:lip} we survey results on peak sets for $A^\alpha$ when $0<\alpha<1$, and in Section \ref{section:zyg} we present the proof by David C. Ullrich that peak sets for the Zygmund class are finite. Section \ref{section:interp} is a survey of results on boundary interpolation sets for $A^\alpha$ ($\alpha>0$) and some other spaces. In Section \ref{section:discalg} we consider the disc algebra, and in Section \ref{section:misc} we compare peak sets and boundary interpolation sets, especially for $A^\alpha$ when $0<\alpha <1$.

Thanks go to David C. Ullrich for granting permission to present his unpublished results on peak sets for the Zygmund class.

\section{Preliminaries}\label{section:prelim}

The letter $C$ denotes a fixed constant whose value may change from one line to the next. We write $u \lesssim v$ to mean $u\leq Cv$, $u \gtrsim v$ to mean $u\geq Cv$, and  $u\approx v$ to mean that both $u \lesssim v$ and $u \gtrsim v$.
If $E$ is a compact subset of $\C$, for $z\in \C$ we denote by $d_E(z)$ the distance from $z$ to $E$. We denote the normalized Lebesgue measure of a measurable subset $U$ of $\T$ by $m(U)$ or $|U|$.

Recall that by a classical result of F. and M. Riesz \cite{fmriesz} the only bounded holomorphic function on $\D$ whose radial limits vanish on a set of positive (Lebesgue) measure is the zero function. Every peak set for a space is the zero set of a function belonging to the same space, so for each space we consider every peak set has measure zero. The same conclusion holds for boundary interpolation sets:
If $E\subset\T$ is a boundary  interpolation set, find a function $f$ that interpolates $\phi(z)=1/z$ on $E$. Then the function $1-zf(z)$ is 0 on $E$ and is 1 at 0, so $E$ has measure zero.

Now we give a necessary condition on $d_E$ when $E\subset \T$ is closed and contained in the zero set of a nonconstant function $f$ in $A^\alpha$, where $0<\alpha\leq 1$. As we have just seen, this condition is satisfied if $E$ is a peak set or a boundary interpolation set for $A^\alpha$.
If such a function $f$ exists, then $\log|f|$ is integrable on $\T$ (Garnett \cite{garn}*{Theorem II.4.1}), and $|f|\leq C d_E^\alpha$ there. Taking the logarithm of each side of the inequality, we conclude that $\log d_E$ is integrable on $\T$. Here is an alternative formulation of this necessary condition: Write $\T\setminus E$ as the disjoint union of a sequence of open arcs, say $\{I_n\}$. Then this condition on $d_E$ says that $\sum_n |I_n|\log(1/|I_n|)$ converges. For these results, see Beurling \cite{beurling}
 and  Carleson \cite{carlzlip}.
We remark that this necessary condition on $E$ is also a sufficient condition for a closed set $E\subset \T$ of measure zero to be the zero set of some $f\in A^\alpha$. In fact, Carleson \cite{carlzlip} proved that one can choose $f$ to have derivatives up to the boundary of any given finite order, and that result was later extended to provide an infinitely differentiable $f$. See Korolevi\v{c} \cite{koro}, Novinger \cite{novz}, Taylor-Williams \cite{taywilze}, and the references therein.

Next we derive a stronger necessary condition on $d_E$ for peak sets. Let $E$ be a peak set for $A^\alpha$, where $0<\alpha\leq 1$, and let $g\in A^\alpha$ be a strong support function. On $\overline{\D}\setminus E$ define $H=1/g$, so $H$ is holomorphic on $\D$ and $\mbox{Re }H>0$. It follows from the Riesz-Herglotz representation theorem that there exist a finite positive Borel measure $\mu$ on $\T$ and a real constant $\beta$ such that, for $z\in \D$,
$$H(z)=i\beta+ \int_{\T}\frac{\zeta+z}{\zeta-z} \; d\mu(\zeta).$$
If $h\; dm$ is the absolutely continuous part of $\mu$ and $\tilde{\mu}$ denotes its Hilbert transform on $\T$, then
\begin{equation} \label{eq:basic}
H=h+i\tilde{\mu}+i\beta \text{ \ \ \ $m$-a.e.\ on $\T$.}
\end{equation}
Furthermore, by Kolmogorov's theorem $\tilde{\mu}$ satisfies a weak-type estimate: For all $\lambda>0$, $$|\{\zeta\in \T\colon |\tilde{\mu}(\zeta)|>\lambda\}|\leq C/\lambda.$$
 Good references for these results are Garnett \cite{garn}*{Sections I.3, I.5, III.1--2} and Koosis \cite{koos}*{Sections I:D--E, V:C}. (Sometimes the results
 are stated only in the case that $\mu$ is absolutely continuous with respect to $m$.)

These results provide information about $d_E$. Combining equation \eqref{eq:basic} with the fact that $|g|\leq C d_E^\alpha$ on $\T$, we obtain:

\begin{prop}\label{prop:bound}

Let $E$ be a peak set for $A^\alpha$, where $0<\alpha\leq 1$. Then there exist a finite positive Borel measure $\mu$ on $\T$ and a constant $C$ such that on $\T$ we have $$d_E^{-\alpha}\leq C(1+h+|\tilde{\mu}|).$$ Here $h \; dm$ is the absolutely continuous part of $\mu.$

\end{prop}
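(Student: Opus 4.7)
The plan is to combine the boundary-value identity \eqref{eq:basic} for $H=1/g$ with the Lipschitz estimate on the strong support function $g$, yielding matching upper and lower bounds on $|H|$ on $\T$ whose combination is exactly the claimed inequality.

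First I would establish the pointwise estimate $|g(z)|\leq C\,d_E(z)^\alpha$ for $z\in\T$. This is immediate from the hypotheses: since $g\in A^\alpha$, the restriction $g|_\T$ lies in $\Lambda_\alpha(\T)$, and since $g\equiv 0$ on $E$, for any $z\in\T$ and any $w\in E$ we have $|g(z)|=|g(z)-g(w)|\leq C|z-w|^\alpha$; taking the infimum over $w\in E$ gives the stated bound. Passing to reciprocals, $|H(z)|=1/|g(z)|\geq (1/C)\,d_E(z)^{-\alpha}$ for $z\in\T\setminus E$.

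Next I would bound $|H|$ from above using \eqref{eq:basic}. Writing $H=h+i\tilde\mu+i\beta$ almost everywhere on $\T$, the triangle inequality gives
\[
|H|\ \leq\ h+|\tilde\mu|+|\beta|\ \leq\ C(1+h+|\tilde\mu|)
\]
$m$-a.e.\ on $\T$ (absorbing the constant $|\beta|$ into the $1$). Note that $h\geq 0$ since $\mu$ is a positive measure, so no absolute value is needed on $h$.

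Chaining the two estimates yields $d_E^{-\alpha}\leq C(1+h+|\tilde\mu|)$ a.e.\ on $\T$; since $E$ has measure zero (as noted in the excerpt), this is the claimed inequality interpreted $m$-a.e.\ on $\T$. There is no real obstacle here—the work has already been done in setting up \eqref{eq:basic} via Riesz–Herglotz and recording the Lipschitz property of $g$; the proposition is essentially the bookkeeping that packages the lower bound from $g\in \Lambda_\alpha$ against the upper bound from the Herglotz representation of $1/g$.
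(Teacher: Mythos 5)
Your proof is correct and follows essentially the same route the paper takes: the Lipschitz bound $|g|\le Cd_E^\alpha$ gives the lower bound on $|H|=1/|g|$, while the Herglotz boundary identity \eqref{eq:basic} gives the upper bound $|H|\le h+|\tilde\mu|+|\beta|$, and chaining the two yields the claim a.e.\ on $\T$. The paper presents this as an immediate consequence of \eqref{eq:basic} and the inequality $|g|\le Cd_E^\alpha$; your write-up simply spells out the same bookkeeping in more detail.
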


By the weak-type estimate for $\tilde{\mu}$ and Proposition \ref{prop:bound}, $d_E^{-\alpha}$ satisfies a weak-type estimate. Hence, for all $\epsilon>0$ we have  $$|\{\zeta\in \T\colon d_E(\zeta)\leq \epsilon\}|\leq C\epsilon^\alpha.$$  See Pavlov \cite{pavlov}, Hutt \cite{hutt}, Dyn{\textsc{\char13}}kin \cite{dynkinpk}, Bruna \cite{brunapk},  and
Lef\`{e}vre at al.\ \cite{lef}*{Proposition 2.8}.

In the case $\alpha=1$ it follows that $E$ is finite. (In addition to the preceding references, see Novinger-Oberlin \cite{novobe}, where an explicit bound for the cardinality of $E$ in terms of a strong support function is given. The weak-type estimate also gives a bound.)
In Section \ref{section:zyg} we present the stronger result due to Ullrich that peak sets are finite if we replace $\Lambda_1$ by the Zygmund class.

Next we discuss briefly the class of functions that are  continuously differentiable up to the boundary. By the preceding result, peak sets in this setting are finite, but here a somewhat more elementary argument is available. It depends on the Hopf lemma for subharmonic functions, a result that is useful for the study of peak sets in $\Cn$.
For a proof see, Forn{\ae}ss-Stens{\o}nes \cite{fost}*{Proposition 12.2}.

\begin{prop}[Hopf lemma]\label{thm:hopf}  Let $\Omega$ be a bounded domain in $\C$ whose boundary is twice continuously differentiable, and let $u$ a negative subharmonic
function on $\Omega$. Then there exists a constant
$C>0$ such that $u(z)\leq -Cd_{\b\Omega} (z)$ for all $z\in \Omega$.
\end{prop}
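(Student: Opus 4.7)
The plan is to establish the linear decay $u(z)\leq -C\,d_{\partial\Omega}(z)$ by constructing an explicit harmonic barrier on a thin annulus near each boundary point and invoking the maximum principle for subharmonic functions.

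First I would exploit the $C^2$ regularity of $\partial\Omega$ to fix a uniform interior ball radius $r_0>0$: at every $p\in\partial\Omega$ there is a ball $B(q_p,r_0)\subset\Omega$ with $p\in\partial B(q_p,r_0)$ and center $q_p=p+r_0 n_p$ along the inward unit normal at $p$. Setting $K:=\{z\in\Omega : d_{\partial\Omega}(z)\geq r_0/2\}$, which is compact in $\Omega$, the upper semicontinuity of $u$ combined with the hypothesis $u<0$ yields $c_0:=-\sup_K u>0$. Since each closed ball $\overline{B(q_p,r_0/2)}$ sits inside $K$, we have $u\leq -c_0$ on the inner circle of the annulus $A_p:=B(q_p,r_0)\setminus\overline{B(q_p,r_0/2)}$.

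The core of the argument is the explicit harmonic barrier
$$h_p(z)=-\frac{c_0}{\log 2}\,\log\frac{r_0}{|z-q_p|}$$
on $A_p$, which vanishes on the outer circle and equals $-c_0$ on the inner circle. Then $u-h_p$ is subharmonic on $A_p$, and I would verify $\limsup_{z\to\partial A_p}(u-h_p)\leq 0$ piece by piece: on the inner circle $u\leq -c_0=h_p$; on the outer circle away from $p$, both $u<0$ (since $z\in\Omega$) and $h_p=0$; as $z\to p$, $h_p\to 0$ while $u\leq 0$. The maximum principle therefore gives $u\leq h_p$ on $A_p$. For $z$ with $d_{\partial\Omega}(z)<r_0/2$, the nearest boundary point $p$ is unique by the uniform interior ball condition, $z$ lies on the inward normal from $p$, and $|z-q_p|=r_0-d_{\partial\Omega}(z)$; combined with the elementary bound $-\log(1-t)\geq t$ on $[0,1)$ this yields
$$u(z)\leq h_p(z)\leq -\frac{c_0}{r_0\log 2}\,d_{\partial\Omega}(z).$$
Points with $d_{\partial\Omega}(z)\geq r_0/2$ are handled directly by $u(z)\leq -c_0\leq -(c_0/\mathrm{diam}(\Omega))\,d_{\partial\Omega}(z)$, and taking $C$ to be the smaller of the two constants completes the proof.

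The main obstacle I anticipate is the careful justification of the maximum principle on $A_p$ up to the single boundary point $p\in\partial\Omega$: since $u$ is only defined on $\Omega$ and need not extend continuously (or even be bounded) at $p$, one must phrase the comparison via $\limsup$ rather than pointwise values. The crucial input is simply that $u\leq 0$, which keeps this step clean; no further regularity of $u$ at the boundary is required.
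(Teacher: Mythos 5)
The paper itself does not prove this proposition; it simply cites Forn{\ae}ss--Stens{\o}nes \cite{fost}*{Proposition 12.2}, so there is no in-paper argument to compare against. Your proof is the standard barrier argument and it is correct: the $C^2$ boundary yields a uniform interior ball radius $r_0$; upper semicontinuity of $u$ and compactness of $K=\{d_{\partial\Omega}\geq r_0/2\}$ give $c_0=-\sup_K u>0$; the logarithmic harmonic barrier on the annulus $B(q_p,r_0)\setminus\overline{B(q_p,r_0/2)}$ is dominated by $u$ via the maximum principle phrased with $\limsup$ (correctly handling the single boundary point $p\in\partial\Omega$ where $u$ need not extend); and $-\log(1-t)\geq t$ turns the logarithmic bound into the linear one, with the far region $d_{\partial\Omega}\geq r_0/2$ handled trivially. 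Two small remarks: (i) you should note that $r_0$ can always be shrunk so that $K\neq\emptyset$; (ii) you invoke uniqueness of the nearest boundary point, but the argument only requires the existence of \emph{some} nearest $p$ together with the first-order fact (from $C^1$ regularity) that $z$ lies on the inward normal at $p$, which already gives $|z-q_p|=r_0-d_{\partial\Omega}(z)$; uniqueness is a harmless byproduct, not a needed input. Neither affects the validity of the proof.
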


Now let $E$ be a peak set with a strong support function $g$ such that $g'\in A$. If $p\in E$ then by the Hopf lemma the derivative at $p$ of $\mbox{Re }g$ in the direction of
a normal to $\T$ is nonzero. Thus $g'(p)\neq 0$, and it follows that $p$ is an isolated point of $E$. This holds for all $p\in E$, so $E$ is finite. This result was proved in
Taylor-Williams \cite{taywilpk} (see also Caveny-Novinger \cite{cavnov}).

We conclude this section by stating a strong form of the converse: Every finite subset of $\T$ is a peak set having a peak function that is rational. This was proved by Taylor and Williams \cite{taywilpk}, who
in fact proved that each finite set is a  peak-interpolation set, in the following sense:

\begin{thm} \label{thm:pkint} Let $E$ be a finite subset of $\T$, and let $\phi$ be a function on $E$ such that $M=\max\{|\phi(\zeta)|\colon \zeta\in E\}$ is positive. Then there exists a rational function $f$ with poles outside $\overline{\D}$ such that $f=\phi$ on $E$ and $|f|<M$ on $\overline{\D}\setminus E$.
\end{thm}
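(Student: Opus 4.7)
My plan is to construct $f$ explicitly as a polynomial combination of high powers of the elementary peak polynomials attached to the points of $E$, and then verify the strict bound $|f|<M$ via a local Taylor expansion near each peak together with exponential smallness elsewhere.

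For each $p_j\in E$ let $\pi_j(z)=\tfrac{1}{2}(1+\bar p_j z)$; then $\pi_j(p_j)=1$, and $|\pi_j(p_j e^{i\theta})|=\cos(\theta/2)<1$ for $\theta\neq 0$. For a parameter $N$ to be chosen large, try $f(z)=\sum_{j=1}^{n} a_j\pi_j(z)^N$: this is a polynomial, hence rational with no poles at all, and the interpolation constraints $f(p_k)=\phi(p_k)$ form a linear system with matrix $A_{kj}=\pi_j(p_k)^N$ whose off-diagonal entries are bounded by $\rho^N$ for $\rho:=\max_{k\neq j}|\pi_j(p_k)|<1$. Hence $A=I+O(\rho^N)$ is invertible once $N$ is large, and the unique solution satisfies $a_j=\phi(p_j)+O(M\rho^N)$.

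To show $|f|<M$ on $\overline{\D}\setminus E$, by subharmonicity of $|f|$ and the maximum modulus principle applied to the nonconstant polynomial $f$ it suffices to establish the bound on $\T\setminus E$. Fix a small $\delta>0$ independent of $N$. Outside the $2\delta$-arcs around the $p_k$'s, every $|\pi_j(z)|\le\cos(\delta/2)<1$, so $|f(z)|\le n(M+O(\rho^N))\cos^N(\delta/2)$, which is $<M$ once $N$ is large. Inside such an arc, at $z=p_k e^{i\theta}$ with $0<|\theta|\le\delta$, I would Taylor expand $f$ about $p_k$; the exact identity $f(p_k)=\phi(p_k)$ together with $f'(p_k)\approx a_k N\bar p_k/2$ and $f''(p_k)\approx a_k N(N-1)\bar p_k^2/4$ yields after a direct computation
\[
|f(z)|^2-M^2 \;=\; -\frac{M^2 N}{4}\,\theta^2 \;+\; O\!\left(M^2 N^2\rho^N\theta^2\right) \;+\; O\!\left(M^2 N^3\theta^3\right),
\]
which is strictly negative on $0<|\theta|\le\delta$ once $N$ is large enough.

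The main obstacle is the delicate cancellation in this expansion in the tight case $|\phi(p_k)|=M$: the positive contribution $|f'(p_k)(z-p_k)|^2\sim M^2 N^2\theta^2/4$ and the negative contribution $2\Re[\overline{\phi(p_k)}\cdot\tfrac{1}{2}f''(p_k)(z-p_k)^2]\sim -M^2 N(N-1)\theta^2/4$ are both of order $N^2$ and must cancel to leave only the small net surplus $-M^2 N\theta^2/4$ of order $N$. This surplus in turn must dominate both the $O(\rho^N)$ errors propagating from $a_j-\phi(p_j)$ through all derivatives and the cubic Taylor remainder; the former is handled by taking $N$ so large that $N^2\rho^N\to 0$, while in the intermediate range $\theta\in[\sqrt{\rho^N/N},\delta]$ where the cubic remainder threatens, the direct estimate $\cos^N(\theta/2)\le e^{-cN\theta^2}$ serves as a replacement that again gives $|f(z)|<M$.
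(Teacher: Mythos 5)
Your ansatz $f=\sum_j a_j\pi_j(z)^N$ with the $a_j$ determined by the linear system is a natural first idea, and the overall structure---maximum modulus, splitting $\T\setminus E$ into arcs near and far from $E$, second-order Taylor expansion at each tight peak---is sensible. But your accounting of the error in the near-peak expansion is wrong in an essential way: the problem is a \emph{linear}-in-$\theta$ term, not the quadratic competition you describe. At a point $p_k$ with $|\phi(p_k)|=M$, using $f(p_k)=\phi(p_k)$ exactly one has
\begin{equation*}
|f(p_ke^{i\theta})|^2-M^2 \;=\; -2\,\Im\bigl[\overline{\phi(p_k)}\,f'(p_k)\,p_k\bigr]\,\theta \;+\; O(N\theta^2) \;+\;\cdots.
\end{equation*}
If $a_k$ were exactly $\phi(p_k)$ and there were no cross contributions from $\pi_j$, $j\ne k$, then $f'(p_k)=\phi(p_k)N\bar p_k/2$ and $\overline{\phi(p_k)}f'(p_k)p_k=M^2N/2$ would be real, killing the linear term. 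But $a_k=\phi(p_k)+\epsilon_k$ with $\epsilon_k=O(M\rho^N)$ of arbitrary phase, and the cross contributions $a_jN\pi_j(p_k)^{N-1}\pi_j'(p_k)$ are $O(MN\rho^{N-1})$, also of arbitrary phase. Hence $\Im\bigl[\overline{\phi(p_k)}f'(p_k)p_k\bigr]$ is generically nonzero of size comparable to $MN\rho^{N}$, giving a term $\sim M^2N\rho^N\theta$ that dominates your surplus $-M^2N\theta^2/4$ for $|\theta|\lesssim\rho^N$ and has the wrong sign on one side of $p_k$. Your written error $O(M^2N^2\rho^N\theta^2)$ is of order $\theta^2$ and simply misses this; neither your intermediate-range estimate $\cos^N(\theta/2)\le e^{-cN\theta^2}$ nor ``take $N$ large'' addresses $|\theta|\lesssim\rho^N$.

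This is not a removable bookkeeping slip. Take $E=\{1,i,-1\}$ and $\phi\equiv 1$. For $N=4$ the system gives $a_1=a_3=10/7$, $a_2=12/7$, and one computes $f'(1)=2+6i/7$, so $\frac{d}{d\theta}\bigl|f(e^{i\theta})\bigr|^2\big|_{\theta=0}=2\Re[f'(1)\,i]=-12/7\neq 0$; thus $|f(e^{i\theta})|>1$ for small $\theta<0$. In this example the sign is governed by $N\bmod 4$, so enlarging $N$ does not help. What is missing is that at each point with $|\phi(p_k)|=M$ one must also enforce the first-order critical condition that $\overline{\phi(p_k)}f'(p_k)p_k$ be real (and positive); solving only the zeroth-order interpolation equations does not guarantee it, and a correct construction must build this in. The paper itself does not reproduce the proof; it cites Taylor and Williams for an explicit construction, so no direct comparison of techniques is possible here beyond noting this gap.
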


The proof in \cite{taywilpk} gives an explicit construction of $f$. Of course, taking $\phi=1$ gives a peak function.

\section{Peak sets for $A^\alpha$}\label{section:lip}

In this section we study peak sets for $A^\alpha$ when $0<\alpha<1$. Some elementary properties of peak sets for $A^\alpha$ are easily stated: Closed subsets and finite unions of peak sets are peak sets (see Noell-Wolff \cite{nowol}). No satisfactory characterization of peak sets has been found.

As we saw in Section \ref{section:prelim}, if $E$ is a peak set for $A^\alpha$ then $d_E^{-\alpha}$ satisfies a weak-type estimate. In preparation for the use of Lorentz spaces below, we write this condition as $d_E^{-\alpha}\in L^{ 1 ,  \infty}(\T)$. Several results for this problem give necessary or sufficient conditions in terms of global conditions on $d_E$ or, equivalently, in terms of the lengths of the complementary arcs $\{I_n\}$ (see Section \ref{section:prelim}). We begin by surveying such results. In the course of our discussion we will see that no characterization of peak sets in terms of these lengths is possible, and we will briefly discuss nonglobal conditions.

Novinger and Oberlin \cite{novobe} proved that a necessary condition for $E$ to be a peak set for $A^\alpha$ is that $\sum_n |I_n|^{1-\alpha}|\log(1/|I_n|)|^{-\delta}$ converge for all $\delta >1$. They proved that the convergence of $\sum_n |I_n|^{(1-\alpha)/(3-\alpha)}$ is a sufficient condition and conjectured that a necessary and sufficient condition is that $\sum_n |I_n|^{1-\alpha}$ converge. Note that their sufficient condition is equivalent to $d_E^{-2/(3-\alpha)}\in L^1(\T)$, and the one in their conjecture is equivalent to $d_E^{-\alpha}\in L^1(\T)$.

We noted in Section \ref{section:prelim} that Bruna \cite{brunapk} showed that $d_E^{-\alpha}\in L^{ 1 ,  \infty}(\T)$ is a necessary condition for $E$ to be a peak set for $A^\alpha$. (As Bruna remarks, the necessary condition given in Novinger-Oberlin \cite{novobe}  follows from this.) Along the lines of the conjecture in \cite{novobe}, he  proved that a sufficient condition is that $d_E^{-\alpha}\in L^{1+\delta}(\T)$ for some $\delta >0$. This result follows from a nonglobal sufficient condition (see inequality \eqref{eq:bruna} below) for peak sets. Bruna uses methods of real analysis such as properties of functions of bounded mean oscillation and Muckenhoupt weights.

An improved  global sufficient condition follows from the work of Hru\v{s}\v{c}\"{e}v \cite{hruscev} and Dyn{\textsc{\char13}}kin \cite{dynkinpk}. Hru\v{s}\v{c}\"{e}v studied the Gevrey class $G_\alpha$ ($0<\alpha < 1$), which consists of all $f$ whose derivatives of all orders belong to $A$ and satisfy an estimate of the form $$|f^{(n)}|\leq R^{n+1} n!^{1+1/\alpha}.$$ He characterized the sets of nonuniqueness for $G_\alpha$, that is, the closed sets $E\subset \T$ for which there exists a nonconstant function in $G_\alpha$ that, together with all of its derivatives, vanishes on $E$.
Dyn{\textsc{\char13}}kin used Hru\v{s}\v{c}\"{e}v's work to show that every set of nonuniqueness for $G_\alpha$ is a peak set for $A^\alpha$. Now, Hru\v{s}\v{c}\"{e}v   gives the convergence of
$$\sum_n |I_n|^{1-\alpha}(\log(1/|I_n|))^{\alpha+\epsilon}$$ for some $\epsilon>0$ as a sufficient condition for a set of nonuniqueness for $G_\alpha$. (Hru\v{s}\v{c}\"{e}v attributes this result to S. A. Vinogradov and proves a stronger version \cite{hruscev}*{page 270}.) By Dyn{\textsc{\char13}}kin's result, this is a sufficient condition to be a peak set for $A^\alpha$.

 Dyn{\textsc{\char13}}kin \cite{dynkinpk} also conjectured that every peak set for $A^\alpha$ is a set of nonuniqueness for $G_\alpha$ (the converse of the result he proved). This conjecture would entail (\cite{hruscev}*{page 254}) that
$d_E^{-\alpha}\in L^1(\T)$ is a necessary condition for $E$ to be a peak set for $A^\alpha$. \"Erikke \cite{erikke} disproved this conjecture by constructing, for each $\alpha\in(0,1)$, a peak set $E$ for $A^\alpha$ such that $d_E^{-\alpha} \not\in L^1(\T)$. The set $E$ is of the form $\{1\}\cup\{\exp(ia_n)\colon n=1, 2, \ldots \}$, where $\{a_n\}$ is an increasing sequence in $(-\pi,0)$ with limit $0$. Independently, in  Noell-Wolff \cite{nowol} it is also proved that $d_E^{-\alpha}\in L^1(\T)$ is not a necessary condition (see Theorem \ref{thm:sharp}(b) below).

Noell and Wolff \cite{nowol} proved that $d_E^{-\alpha}\in L^1(\T)$ is a sufficient condition for $E$ to be a peak set for $A^\alpha$ and gave as a necessary condition that $d_E^{-\alpha}$ belong to a certain Lorentz space. Both of these results are sharp as global conditions expressed in terms of Lorentz spaces. They also characterized (in terms of Lorentz spaces) peak sets among Cantor sets with variable ratio of dissection. To state these results we first define the Lorentz spaces $L^{1,q}$ on a measure space $(X,\mu)$. (One can define $L^{p,q}$ for $0<p<\infty$, but we do not need these spaces.)  For $0<q<\infty$ we define $L^{1,q}(X,\mu)$, or just $L^{1,q}(X)$, to be the set of complex-valued measurable functions $f$ on $X$ for which $$\int_0^\infty  (\lambda \mu\{x\in X\colon |f(x)|> \lambda\})^{q} \frac{d\lambda}{\lambda}<\infty.$$ Thus $L^{1,1}(X)=L^{1}(X)$. The elements of $L^{1,\infty}(X)$ are those functions $f$ satisfying a weak-type estimate: $$\sup_{\lambda > 0} \lambda \mu\{x\in X\colon |f(x)|> \lambda\}<\infty.$$ Good references for Lorentz spaces in the context of interpolation spaces are Bergh-L\"ofstr\"om \cite{belo} and Grafakos \cite{graf}.

The following three theorems state the main results of \cite{nowol} on global conditions for peak sets.

\begin{thm}\label{thm:char} Let $E$ be a closed subset of $\T$ of measure zero, and let $\alpha\in(0,1)$.
\begin{enumerate}[(a)]
\item If $d_E^{-\alpha}\in L^1(\T)$ then $E$ is a peak set for $A^\alpha$.
\item If $E$ is a peak set for $A^\alpha$ then $d_E^{-\alpha}\in L^{1,1/\alpha}(\T)$.
\end{enumerate}
\end{thm}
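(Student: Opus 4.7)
\emph{Part (a).} The plan is to construct a strong support function via the Herglotz transform. Set $u := d_E^{-\alpha}$, which lies in $L^1(\T)$ by hypothesis, and define
\[
G(z) = \int_\T \frac{\zeta + z}{\zeta - z}\, u(\zeta)\, dm(\zeta), \qquad z \in \D,
\]
together with $g = 1/(1+G)$. Then $g$ is holomorphic on $\D$ with $\Re g = (1 + \Re G)/|1+G|^2 > 0$, and the boundary values satisfy $|g(\zeta)| \leq (1 + u(\zeta))^{-1} \lesssim d_E(\zeta)^\alpha$ a.e.\ on $\T$, so $g$ will vanish on $E$ and be nonvanishing with positive real part elsewhere. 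The substantive task is to promote $g$ to $A^\alpha$: by the Hardy--Littlewood derivative criterion this reduces to the pointwise estimate $|g'(z)| \lesssim (1-|z|)^{\alpha - 1}$ on $\D$. I plan to derive this from the sharper bound $|g'(z)| \lesssim d_E(z)^{\alpha - 1}$, obtained by writing $g' = -G'/(1+G)^2$ and combining the two Poisson-kernel estimates $|G'(z)| \lesssim d_E(z)^{-\alpha - 1}$ and $|1+G(z)| \geq \Re G(z) \gtrsim d_E(z)^{-\alpha}$; both come from splitting the defining integral for $G$ across the arc of $\T$ of length $\sim d_E(z)$ centered at $z/|z|$ and using $u = d_E^{-\alpha}$ pointwise. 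Since $d_E(z) \geq 1 - |z|$ and $\alpha - 1 < 0$, the Hardy--Littlewood bound follows. Verifying those two Poisson-kernel estimates uniformly in the position of $z$ relative to $E$ will be the main technical point in (a).

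\emph{Part (b).} Starting from Proposition \ref{prop:bound}, there is a finite positive Borel measure $\mu = h\, dm + \mu_s$ associated with a strong support function such that $d_E^{-\alpha} \lesssim 1 + h + |\tilde\mu|$ a.e.\ on $\T$. Changing variables $\epsilon = \lambda^{-1/\alpha}$ in the defining integral of the Lorentz quasinorm shows that the desired conclusion $d_E^{-\alpha} \in L^{1,1/\alpha}(\T)$ is equivalent to
\[
\int_0^1 \left(\frac{|\{\zeta \in \T : d_E(\zeta) < \epsilon\}|}{\epsilon^\alpha}\right)^{1/\alpha} \frac{d\epsilon}{\epsilon} < \infty,
\]
which is a strengthening of the weak-type bound $|\{d_E < \epsilon\}| \lesssim \epsilon^\alpha$ by a dyadic $\ell^{1/\alpha}$ factor. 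The plan is a decomposition in scale: for $\epsilon_k = 2^{-k}$, bound $|\{d_E < \epsilon_k\}|$ by localizing the mass of $\mu$ to an $\epsilon_k$-neighborhood of $E$ and using the pointwise bound $|g(\zeta)| \lesssim d_E(\zeta)^\alpha$, then sum the resulting contributions against the geometric weights coming from the dyadic split.

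The main obstacle is that neither $h \in L^1$ nor the Kolmogorov weak-type bound for $\tilde\mu$ alone delivers more than $L^{1,\infty}$. The improvement to $L^{1,1/\alpha}$ will require using \emph{both} ingredients together with the additional structural fact that $\mu$ arises from a function $g \in A^\alpha$ vanishing on $E$: the singular part $\mu_s$ is supported on $E$, and the distribution of $\mu$ at each scale $\epsilon$ is pinned down by $|g| \lesssim d_E^\alpha$. Extracting the extra cancellation cleanly---so that the dyadic contributions are summable in $\ell^{1/\alpha}$ rather than merely bounded in $\ell^\infty$---is the technical heart of the argument.
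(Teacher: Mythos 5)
Your part (a) uses essentially the same device as the paper (Proposition \ref{prop:tool}): build the peak function from the Herglotz transform of a positive measure whose absolutely continuous part controls $d_E^{-\alpha}$, then verify membership in $A^\alpha$ by a pointwise derivative bound. But there is a genuine gap precisely at the step you label the ``main technical point''. You need the uniform estimate $|G'(z)|\lesssim d_E(z)^{-\alpha-1}$, and this does \emph{not} follow from $d_E^{-\alpha}\in L^1(\T)$ when you take $\mu=d_E^{-\alpha}\,dm$ raw: the integral $\int u(\zeta)|\zeta-z|^{-2}\,dm(\zeta)$ picks up contributions from the cusps of $d_E^{-\alpha}$ at arc endpoints and from accumulations of many small complementary arcs near a fixed larger one, and these can overshoot $d_E(z)^{-\alpha-1}$ by an unbounded factor. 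This is exactly the obstruction the paper's proof is built to overcome: the authors first smooth and truncate $d_E^{-\alpha}$ to control $|u'|$, observe that the needed bound on $|\tilde{u}'|$ (their inequality \eqref{eq:bound}) can still fail, locate the intervals where it fails, and on each one add a smooth $H^1$ atom to the density so that $|\tilde{\mu}|$ is boosted and the inequality is restored without enlarging $\|\mu\|$ appreciably. That modification is the content of the argument, not a technicality; your plan as written has no analogue of it and so does not yield the derivative estimate.

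Part (b) has a similar problem. The Lorentz-space reformulation and the dyadic decomposition are correct as far as they go, but you stop at the statement that ``extracting the extra cancellation cleanly\ldots is the technical heart of the argument'' without identifying the tool that does the extraction. The paper proves necessity via Proposition \ref{prop:local}, a localized strengthening of Kolmogorov's weak-type bound: for any $(N,\delta)$-disjoint family $\mathcal{I}$ of intervals and thresholds $\lambda_I>C_1\|\mu\|/|I|$, one has $\sum_{I\in\mathcal{I}}\lambda_I\,|\{x\in I: |h|+|\tilde{\mu}|>\lambda_I\}|\leq C_2\|\mu\|$. Applied with $\mathcal{I}$ essentially the complementary arcs and $\lambda_I\approx |I|^{-\alpha}$, this is what converts the single global weak-type bound into an $\ell^{1/\alpha}$-summable family of local estimates. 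Without such a lemma (or an equivalent decomposition of $\mu$ adapted to the arc structure), the dyadic scheme you describe only recovers $d_E^{-\alpha}\in L^{1,\infty}$, which is Bruna's weaker result, not the claimed $L^{1,1/\alpha}$ conclusion.
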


The next result shows that the conditions of Theorem \ref{thm:char} are sharp in terms of Lorentz spaces. We use the Lorentz space $l^{1,q}$ obtained by taking $(X,\mu)$ to be  $\N$ with counting measure. If $E$ is a closed subset of $\T$ of measure zero then,  for $0<q<\infty$, $d_E^{-\alpha}\in L^{1,q}(\T)$
if and only the lengths of the complementary arcs satisfy $\{|I_n|^{1-\alpha}\}\in\l^{1,q}$. We denote by $\tau(E)$ the nonincreasing rearrangement of the sequence $\{|I_n|\}$.

\begin{thm}\label{thm:sharp} Let $\alpha\in(0,1)$, and assume that $\{c_n\}$ is a nonincreasing sequence of positive numbers such that $\sum_n c_n=1$.
\begin{enumerate}[(a)]
\item Suppose $\{c_n^{1-\alpha}\}\not\in l^1$ and $\{c_n\}$ contains a subsequence $\{c_{n_k}\}$ such that $c_{n_k}^{1-\alpha}\approx 2^{-k}$. Then there exists a closed set $E\subset\T$ of measure zero such that $\tau(E)=\{c_n\}$ and $E$ is not a peak set for $A^\alpha$.
\item Suppose $\{c_n^{1-\alpha}\}\in l^{1,1/\alpha}$. Then there exists a peak set $E$ for $A^\alpha$ such that $\tau(E)=\{c_n\}$ .
\end{enumerate}
\end{thm}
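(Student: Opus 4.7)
The result splits along its two parts, each demanding a separate construction.

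For (a), the plan is to cluster arcs of comparable length adjacent to one another, forcing $d_E^{-\alpha}$ out of $L^{1,1/\alpha}(\T)$ so that Theorem \ref{thm:char}(b) precludes $E$ from being a peak set. Group the $c_n$ into dyadic blocks $B_k = \{c_n \colon n_k \le n < n_{k+1}\}$, each of size $m_k := n_{k+1} - n_k$, with $c_n^{1-\alpha} \approx 2^{-k}$ throughout $B_k$; the hypothesis $\{c_n^{1-\alpha}\} \not\in l^1$ together with $c_{n_k}^{1-\alpha} \approx 2^{-k}$ forces $\sum_k m_k 2^{-k} = \infty$. Place the $m_k$ arcs from $B_k$ side by side as a single cluster $C_k$ of total length $\approx m_k 2^{-k/(1-\alpha)}$, with the clusters positioned at pairwise disjoint locations on $\T$. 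Every point of $C_k$ then lies within $\lesssim 2^{-k/(1-\alpha)}$ of an endpoint of $E$, so $d_E^{-\alpha} \gtrsim 2^{k\alpha/(1-\alpha)}$ on $C_k$.

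Setting $\lambda_k := 2^{k\alpha/(1-\alpha)}$, one obtains the distribution estimate
\[
|\{\zeta \in \T \colon d_E^{-\alpha}(\zeta) > \lambda_k\}| \gtrsim \sum_{j \ge k} m_j 2^{-j/(1-\alpha)}.
\]
Inserting this into the defining integral of the $L^{1,1/\alpha}$ quasi-norm via a dyadic decomposition in $\lambda$ produces a divergent multiple of $\sum_k m_k 2^{-k}$. Hence $d_E^{-\alpha} \not\in L^{1,1/\alpha}(\T)$, and Theorem \ref{thm:char}(b) rules out $E$ being a peak set for $A^\alpha$. The main bookkeeping nuisance is ensuring $\tau(E) = \{c_n\}$ exactly: the clusters must be separated on $\T$ by arcs drawn from the sequence itself, placed so that no two prescribed arcs accidentally merge into one.

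For (b), the plan is opposite: spread the arcs as widely as possible and produce a peak function by a structured construction. A direct appeal to Theorem \ref{thm:char}(a) is unavailable, since $\{c_n^{1-\alpha}\} \in l^{1,1/\alpha}$ is strictly weaker than $l^1$ when $\alpha < 1$, and $\|d_E^{-\alpha}\|_{L^1(\T)} \approx \sum_n |I_n|^{1-\alpha}$ depends only on the sequence of lengths, not their arrangement. Instead, I would build $E$ as a Cantor-type set with variable dissection ratios and invoke the Noell-Wolff characterization of peak sets among such Cantor sets mentioned in the paragraph preceding the theorem. Recursively, at each dyadic scale $2^{-k/(1-\alpha)}$, take the corresponding block $B_k$ of arcs from $\{c_n\}$ and insert them as the stage-$k$ gaps of a symmetric Cantor construction within the stage-$(k-1)$ complement.

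The main obstacle is to show that the dissection ratios of this Cantor set satisfy the Lorentz-space hypotheses of the Cantor-set characterization. Concretely, one must verify that the sizes of the pieces created at stage $k$, which are determined by how $B_k$ is inserted into the stage-$(k-1)$ gaps, satisfy the required weak-type estimate. This reduces to a sequence-space calculation that follows from the definition of $l^{1,1/\alpha}$ and the dyadic grouping of $\{c_n\}$: the Lorentz condition translates exactly into the correct summability of the ratios across stages. Once this is checked, the Cantor-set characterization supplies a peak function in $A^\alpha$, completing the construction.
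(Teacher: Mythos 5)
Both parts of your proposal rest on ideas that cannot work, and the obstructions are worth understanding because they point directly at why the paper's actual arguments look the way they do.

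For (a), the plan is to arrange the arcs so that $d_E^{-\alpha}\not\in L^{1,1/\alpha}(\T)$ and then invoke Theorem \ref{thm:char}(b). This cannot succeed, for a structural reason stated in the paper just before the theorem: membership of $d_E^{-\alpha}$ in any Lorentz space $L^{1,q}(\T)$ depends only on the sequence $\{|I_n|^{1-\alpha}\}$ (equivalently, $|\{d_E^{-\alpha}>\lambda\}|=\sum_n\min(|I_n|,2\lambda^{-1/\alpha})$), not on the arrangement. Clustering adjacent arcs therefore changes nothing. Moreover, the hypotheses of (a) do not force $\{c_n^{1-\alpha}\}\not\in l^{1,1/\alpha}$: take, say, $m_k\approx 2^k/k$ copies of a length $\ell_k$ with $\ell_k^{1-\alpha}\approx 2^{-k}$; then $\sum_k m_k 2^{-k}=\infty$ so $\{c_n^{1-\alpha}\}\not\in l^{1}$ and the subsequence condition holds, yet a short computation shows $\{c_n^{1-\alpha}\}\in l^{1,1/\alpha}$. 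For such data Theorem \ref{thm:char}(b) imposes no obstruction, so your argument produces no conclusion at all, even though the theorem still asserts some arrangement is not a peak set. (Your claimed divergence ``a divergent multiple of $\sum_k m_k 2^{-k}$'' is also wrong as a computation: keeping only the $j=k$ term gives roughly $\sum_k(m_k2^{-k})^{1/\alpha}$, and divergence of $\sum_k m_k 2^{-k}$ with $1/\alpha>1$ does not imply divergence of that series.) The paper avoids all of this by using the localized weak-type estimate, Proposition \ref{prop:local}, applied to Proposition \ref{prop:bound}: this is genuinely sensitive to the geometry of the arrangement, which is precisely what the global Lorentz condition is not.

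For (b), the appeal to the Cantor-set characterization (Theorem \ref{thm:cantor}) is doubly unavailable. First, a Cantor set with variable dissection ratio has its complementary arc lengths occurring in batches of $2^{n-1}$ equal values, so a generic sequence $\{c_n\}$ cannot be realized as $\tau(E)$ for such a set at all. Second, and more to the point, Theorem \ref{thm:cantor} characterizes peak Cantor sets by $d_E^{-\alpha}\in L^{1,2/(1+\alpha)}(\T)$, and since $2/(1+\alpha)<1/\alpha$ for $\alpha\in(0,1)$ this is a strictly stronger requirement than $l^{1,1/\alpha}$. In other words, Cantor sets are exactly the wrong geometry: they are the hardest among sets with given $\tau(E)$ to be peak sets, whereas Theorem \ref{thm:sharp}(b) is asking you to exhibit the easiest. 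The paper instead takes $E=\{0\}\cup\{a_n\}$ with $a_n=\sum_{j\ge n}c_j$, a one-sided sequence accumulating at a single point, and builds a measure $\mu=\mu_0+h\,dx$ with $\mu_0$ the point mass at $0$. That singular part produces $\widetilde{\mu_0}(x)=1/(\pi x)$, which already dominates $d_E^{-\alpha}$ away from small intervals around the $a_n$, and the absolutely continuous correction $h$ needed to cover those intervals lies in $L^1$ exactly when $\{c_n^{1-\alpha}\}\in l^{1,1/\alpha}$ (equivalently, $\sum_n a_n^{-1+1/\alpha}<\infty$). Proposition \ref{prop:tool} then turns this measure into a peak function. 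This use of a singular measure supported on $E$ is the essential mechanism, and it is precisely the flexibility that a Cantor geometry does not offer.
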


One consequence of this result is that the sequence $\tau(E)$ does not in general determine whether a closed set $E\subset\T$ of measure zero is a peak set for $A^\alpha$: For each $\alpha\in(0,1)$ there exist two closed subsets of $\T$ having measure zero such that one is a peak set for $A^\alpha$, the other is not, and the two sequences of lengths of complementary arcs are rearrangements of each other.

In the case of Cantor sets with variable ratio of dissection there is a metric characterization of peak sets. Here is the construction: Fix a sequence $\{\sigma_n\}_{n=1}^\infty$ in $(0,\frac{1}{2})$ with limit $\sigma$ such that $\sigma\in (0,\frac{1}{2})$.  Put $\delta_0=1$ and, for $n\geq 1$, $\delta_n=\prod_{k \leq n} \sigma_k$. Apply the Cantor construction on $[0,1]$ using $\sigma_n$ as the dissection ratio at the $n$th stage to obtain a closed set $E$ of measure zero, and identify $E$ with a subset of $\T$. We say that $E$ has parameter $\sigma$. Put $\alpha=1-\log 2/\log(1/\sigma)$.
The complement of $E$ is the union over $n\in \N$ of $2^{n-1}$ open arcs, each of length $\delta_{n-1}(1-2\sigma_n)$.
If $\beta<\alpha$ then considering the complementary arcs shows that $d_E^{-\beta}\in L^1(\T)$, so $E$ is a peak set for $A^\beta$. If $\beta >\alpha$ then $d_E^{-\beta}\not\in L^{1,\infty}(\T)$, so
$E$ is not a peak set for $A^\beta$. Thus, the interesting case for peak sets is  $\beta=\alpha$.

\begin{thm}\label{thm:cantor} Let $E$ be a Cantor set with variable ratio of dissection having parameter $\sigma$, and put $\alpha=1-\log 2/\log(1/\sigma)$. Then $E$ is a peak set for $A^\alpha$ if and only if $d_E^{-\alpha}\in L^{1,2/(1+\alpha)}(\T)$.
\end{thm}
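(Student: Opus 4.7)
The plan is to convert the Lorentz-space hypothesis into a concrete series condition on the Cantor data, then prove each implication by refining the general results of Theorem \ref{thm:char} with the help of self-similarity.

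First I would recast the condition $d_E^{-\alpha}\in L^{1,q}(\T)$ in combinatorial form. Since stage $k$ of the Cantor construction produces $2^{k-1}$ arcs each of length $\ell_k\approx\delta_{k-1}$, and since $\|\{a_n\}\|_{l^{1,q}}^q\asymp\sum_n n^{q-1}(a_n^*)^q$, the equivalence $d_E^{-\alpha}\in L^{1,q}\iff\{|I_n|^{1-\alpha}\}\in l^{1,q}$ noted just before Theorem \ref{thm:sharp} reduces, after grouping by stage, to
$$\sum_k 2^{kq}\,\ell_k^{(1-\alpha)q}<\infty.$$
Writing $\sigma=2^{-1/(1-\alpha)}$ and $\tau_j=\sigma_j/\sigma$ (so $\tau_j\to1$), at $q=2/(1+\alpha)$ this simplifies to
$$\sum_k \Bigl(\prod_{j<k}\tau_j\Bigr)^{2(1-\alpha)/(1+\alpha)}<\infty,$$
which cleanly measures how fast $\{\sigma_j\}$ approaches its limit $\sigma$.

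For the ``only if'' direction, I would start from Proposition \ref{prop:bound}: a peak function for $E$ yields $d_E^{-\alpha}\le C(1+h+|\tilde\mu|)$. The weak-type $(1,1)$ bound on $\tilde\mu$ alone gives only $L^{1,\infty}$, and Theorem \ref{thm:char}(b) already sharpens this to $L^{1,1/\alpha}$. To reach $L^{1,2/(1+\alpha)}$ on a Cantor set I would decompose $\mu$ by stage and observe that the stage-$k$ portion of $\tilde\mu$ behaves like a rescaled copy of the full problem. Combining this self-similar structure with Marcinkiewicz-type interpolation between the $L^2$ boundedness of $\tilde\mu$ and its weak $L^1$ bound should deliver the required distributional estimate for $d_E^{-\alpha}$.

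For the ``if'' direction, I would construct a strong support function explicitly. Because $\log d_E$ is integrable, there is an outer function $F$ on $\D$ with $|F|=d_E^{-\alpha}$ on $\T$; set $g=1/F$, so $|g|=d_E^\alpha$ on $\T$ and $g$ vanishes exactly on $E$ (with $\Re g>0$ on $\overline{\D}\setminus E$ after a standard rotation in $F$'s argument). To confirm $g\in A^\alpha$ I would reduce the Lipschitz estimate to control of the conjugate function of $\alpha\log d_E$, split that integral according to the Cantor stages, and sum the stage-by-stage contributions using the series form of the Lorentz hypothesis from the first step at $q=2/(1+\alpha)$.

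The main obstacle is that $2/(1+\alpha)$ is precisely the harmonic mean of the general sufficient exponent $1$ (Theorem \ref{thm:char}(a)) and the general necessary exponent $1/\alpha$ (Theorem \ref{thm:char}(b)), so both directions must be sharpened just enough to meet exactly at this Lorentz index. Realizing this meeting point -- neither losing a logarithm in the interpolation of the necessity direction nor being forced to a stricter summation in the sufficiency direction -- requires careful bookkeeping in which the dyadic block size $2^{k-1}$ at each Cantor stage plays the decisive role.
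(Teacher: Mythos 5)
The paper does not actually prove Theorem~\ref{thm:cantor}; it only cites \cite{nowol} and sketches proofs for Theorem~\ref{thm:char}(a) and Theorem~\ref{thm:sharp}(b). So your proposal must be judged on its own, in light of the tools the paper \emph{does} develop, namely Proposition~\ref{prop:tool} (the constructive criterion for peak sets) and Proposition~\ref{prop:local} (the localization tool the paper flags as the one used to prove that sets are \emph{not} peak sets).

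Your opening reduction is fine: the stage-by-stage count $2^{k-1}$ arcs of length $\approx\delta_{k-1}$ together with the $l^{1,q}$ quasi-norm does give $\sum_k 2^{kq}\delta_{k-1}^{(1-\alpha)q}<\infty$, which matches the paper's stated equivalent form $\{2^n\delta_n^{1-\alpha}\}\in l^{2/(1+\alpha)}$; writing $\sigma_j=\sigma\tau_j$ and using $\sigma^{1-\alpha}=\frac12$ to cancel the $2^{kq}$ factor is a correct simplification. The observation that $2/(1+\alpha)$ is the harmonic mean of the two general exponents is also accurate.

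The problems begin with the two implications. For the sufficiency direction there is a genuine gap. An outer function $F$ with $|F|=d_E^{-\alpha}$ on $\T$ does exist, and $g=1/F$ has $|g|=d_E^{\alpha}$, but on the boundary $\arg g$ is (minus) the conjugate function of $\alpha\log d_E^{-1}$, which for a general Cantor set is an unbounded function; no ``standard rotation'' by a constant will force $\Re g>0$ off $E$, and replacing $g$ by a power changes the exponent. This is exactly the difficulty that Proposition~\ref{prop:tool} is designed to bypass: one constructs a positive measure $\mu$ on the line (or circle), defines $H$ as the Cauchy--Herglotz integral of $\mu$ so that $\Re H>0$ is automatic, and then sets $g=1/H$; the Lipschitz regularity is then a matter of verifying the derivative bound~\eqref{eq:bound}. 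Your plan to ``control the conjugate function of $\alpha\log d_E$'' replaces that verification but sidesteps the positivity issue, which is the one that actually blocks the argument.

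For the necessity direction the proposal is too vague to assess. The paper tells you what the right tool is: Proposition~\ref{prop:local}, a localized weak-type estimate applied to an $(N,\delta)$-disjoint collection of intervals, which for the Cantor set is naturally taken to be the complementary gaps at each stage, and the divergence of the resulting sum precludes the bound in Proposition~\ref{prop:bound}. Your sketch instead proposes a Marcinkiewicz-type interpolation between weak $L^1$ and $L^2$ boundedness of the Hilbert transform on a ``stage-by-stage decomposition'' of $\mu$. But $\mu$ is a finite measure of fixed total variation, and what is wanted is a distributional estimate on $\tilde\mu$ on the \emph{circle}, not an operator bound; interpolation between those two endpoints does not by itself produce a quantitative $L^{1,2/(1+\alpha)}$ bound for $d_E^{-\alpha}$, and you do not explain how the self-similarity enters the estimate. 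You would need to make precise how the portions of $\mu$ near different generations of gaps contribute and why the dyadic multiplicity $2^{k}$ combines with the gap size $\delta_{k-1}$ to give exactly the exponent $2/(1+\alpha)$; that bookkeeping is the proof, and it is missing.
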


Note that this Lorentz space is halfway between that for the necessary condition in Theorem \ref{thm:char} and that for the sufficient condition. An equivalent formulation of the condition $d_E^{-\alpha}\in L^{1,2/(1+\alpha)}(\T)$ is that $\{2^n\delta_n^{1-\alpha}\}\in l^{2/(1+\alpha)}$. For example, the usual middle-thirds Cantor set is not a peak set for $A^\alpha$ if $\alpha=1-\log 2/\log 3$.

Now we discuss briefly the proofs of the first two of these results, sketching the proofs of  only Theorem \ref{thm:char}(a) and Theorem \ref{thm:sharp}(b). Fix $\alpha\in(0,1)$. We follow \cite{nowol} by working on the upper half plane $\H$ and write $A^\alpha(\H)$ for the corresponding space. Both Theorem \ref{thm:char}(a) and Theorem \ref{thm:sharp}(b)  assert the existence of peak functions. In each case, the procedure used for constructing such functions depends on the following.

\begin{prop}\label{prop:tool}
Let $E$ be a compact subset of $\R$. Assume that there exists a finite positive Borel measure $\mu$ on $\R$ such that, if $h \; dx$ is the absolutely continuous part of $\mu$, then $h$ is smooth (infinitely differentiable) on $\R\setminus E$, the mutually singular part of $\mu$ is supported on $E$, and on $\R\setminus E$ we have
$$d_E^{-\alpha}\leq C(1+h+|\tilde{\mu}|)$$
and \begin{equation} \label{eq:bound} |h'|+|\tilde{\mu}'|\leq C(1+h+|\tilde{\mu}|)^{1+1/\alpha} \end{equation}
(Here $\tilde{\mu}$ is the Hilbert transform of $\mu$ on $\R$.)
Then $E$ is a peak set for $A^\alpha(\H)$.
\end{prop}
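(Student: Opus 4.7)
The plan is to construct an explicit strong support function of the form $g=1/H$, where $H$ is a Cauchy integral of $\mu$ tilted to have positive real part. Set $F(z)=\int_{\R}d\mu(t)/(t-z)$ on $\H$; positivity of $\mu$ gives $\Im F>0$, so $H:=1-iF$ satisfies $\Re H\ge 1$ on $\H$. Because $h$ is smooth on $\R\setminus E$ and $\mu_s$ is concentrated on $E$, the Plemelj--Sokhotski formulas give boundary values $H(x)=1+\pi h(x)-i\pi\tilde\mu(x)$ for $x\in\R\setminus E$, whence $|H(x)|\asymp 1+h(x)+|\tilde\mu(x)|$. With $g:=1/H$, the function $g$ is holomorphic and bounded by $1$ on $\H$, has $\Re g=\Re H/|H|^2>0$, and satisfies $|g(x)|\le Cd_E(x)^{\alpha}$ on $\R\setminus E$ by hypothesis (i); thus $g$ extends continuously to $\R$ by setting $g\equiv 0$ on $E$. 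The whole problem reduces to showing $g\in A^\alpha(\H)$, which by the Hardy--Littlewood theorem (a bounded holomorphic function on $\H$ with boundary trace in $\Lambda_\alpha(\R)$ automatically belongs to $\Lambda_\alpha(\overline\H)$) reduces further to proving $g|_{\R}\in\Lambda_\alpha(\R)$.

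The crucial auxiliary object is $V(z):=H(z)^{-1/\alpha}$ (principal branch), holomorphic on $\H$ since $\Re H>0$. Hypothesis (ii) combined with $|H'(x)|\lesssim |h'(x)|+|\tilde\mu'(x)|$ produces a precise cancellation of exponents: for $x\in\R\setminus E$,
$$|V'(x)|=\tfrac{1}{\alpha}|H(x)|^{-1/\alpha-1}|H'(x)|\lesssim |H(x)|^{-1/\alpha-1}\cdot |H(x)|^{1+1/\alpha}=1,$$
so $V'$ is uniformly bounded on each complementary arc of $\R\setminus E$. Hypothesis (i) yields $|V(x)|=|H(x)|^{-1/\alpha}\lesssim d_E(x)$; combined with $V\to 0$ on $E$, the bounded derivative on each arc together with the triangle inequality applied through points of $E$ make $V|_{\R}$ globally Lipschitz-$1$ on $\R$. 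The pointwise identities $|g(x)|=|V(x)|^{\alpha}$ and $|g'(x)|\le C|H(x)|^{1/\alpha-1}=C|V(x)|^{\alpha-1}$ then translate the $\Lambda_1$ data on $V$ into usable bounds for $g$.

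To verify $|g(x_1)-g(x_2)|\le C|x_1-x_2|^{\alpha}$ for $x_1,x_2\in\R$: if $E$ separates $x_1$ from $x_2$ or one of them lies in $E$, then $\min\{d_E(x_1),d_E(x_2)\}\le|x_1-x_2|$ and the triangle inequality applied to $|g(x_i)|\le C d_E(x_i)^{\alpha}$ suffices. Otherwise $x_1,x_2$ lie in the same complementary arc, and we split on the size of $|V(x_1)|$ relative to $|x_1-x_2|$. If $|V(x_1)|\lesssim|x_1-x_2|$, Lipschitzness of $V$ gives $|V(x_i)|\lesssim|x_1-x_2|$, hence $|g(x_i)|=|V(x_i)|^{\alpha}\lesssim|x_1-x_2|^{\alpha}$, and again triangle inequality closes the case. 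If $|V(x_1)|\gtrsim|x_1-x_2|$, Lipschitzness instead forces $|V(t)|\ge|V(x_1)|/2$ for $t\in[x_1,x_2]$, so integrating
$$|g(x_1)-g(x_2)|\le\int_{x_1}^{x_2}|g'(t)|\,dt\lesssim|V(x_1)|^{\alpha-1}|x_1-x_2|$$
gives $\lesssim|x_1-x_2|^{\alpha}$ since $(|V(x_1)|/|x_1-x_2|)^{\alpha-1}$ is bounded (using $\alpha-1<0$ and $|V(x_1)|/|x_1-x_2|\gtrsim 1$). Hardy--Littlewood then upgrades $g|_{\R}\in\Lambda_\alpha(\R)$ to $g\in A^\alpha(\H)$, producing the desired strong support function for $E$. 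The main subtlety is the Lipschitz-$1$ property of $V$, which hinges on the exact exponent $1+1/\alpha$ in (ii); working with the real-valued function $|V|$ in the integration above cleanly bypasses any branch-cut complications one would encounter attempting to write $g$ directly as a fractional power of $V$.
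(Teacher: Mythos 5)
Your proof is correct and follows the same construction the paper sketches: form the Cauchy transform $H$ of $\mu$, use positivity of $\mu$ to get $\Re H>0$, and take the strong support function $g=1/H$ (equivalently the paper's peak function $H/(1+H)$). The paper compresses the verification that the resulting function lies in $A^\alpha(\H)$ into one sentence; your device of introducing $V=H^{-1/\alpha}$, for which hypothesis \eqref{eq:bound} yields a uniform Lipschitz bound on $\R$, and then running the two-case comparison of $|V(x_1)|$ against $|x_1-x_2|$, is a correct and clean way to carry out exactly that step.
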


\begin{proof}[Sketch of proof]
For $z\in \H$ define $$H(z)=\frac{1}{\pi i} \int_{\R} \frac{1}{\zeta-z} \; d\mu(\zeta).$$ Then $H$ is holomorphic, $\mbox{Re } H> 0$, and $f=H/(1+H)$ extends continuously to $\overline{\H}$ as a peak function for $E$ that is smooth on $\R\setminus E$. One checks using inequality \eqref{eq:bound} that $f\in A^\alpha(\H)$.
\end{proof}

Note that the converse of Proposition \ref{prop:bound} is false, but with its additional hypotheses Proposition \ref{prop:tool} serves as a converse for the arguments that follow.\\

\noindent {\it Sketch of proof of Theorem \ref{thm:char}(a).} We construct an integrable function $h$ such that Proposition \ref{prop:tool} is satisfied with $\mu=h \; dx$.  By hypothesis, $d_E^{-\alpha}$ is locally integrable on $\R$. We smooth and truncate this function on each component of the complement of $E$ to get a smooth nonnegative integrable function $u$ on $\R\setminus E$,  with control on the derivatives and the Hilbert transform $\tilde{u}$. Then $u$ satisfies $|u'|\leq C(1+u)^{1+1/\alpha}$, but the bound on $|\tilde{u}'|$ needed to verify inequality \eqref{eq:bound} may fail. To account for this, first we get control on the size of intervals where this bound is a problem. Then on each such interval we add to $u$ a smooth $H^1$ atom. (Recall that an $H^1$ atom is an integrable function $v$ having mean value zero such that there exists an interval $(a,b)$ containing the support of $v$  and  $|v|\leq 1/(b-a)$.)
The result of adding these atoms to $u$ is the function $h$ with the desired properties. \hfill\qedsymbol\\

\noindent {\it Sketch of proof of Theorem \ref{thm:sharp}(b).} We find a peak set $E$ for $A^\alpha(\H)$ such that the nonincreasing rearrangement of the lengths of the bounded components of the complement of $E$ equals the given nonincreasing sequence $\{c_n\}\in l^{1}$ of positive numbers satisfying $\{c_n^{1-\alpha}\}\in l^{1,1/\alpha}$. We take $E= \{0\}\cup \{a_n\}$, where $a_n=\sum_{j=n}^\infty c_j$. Note that $\{a_n\}$ is a decreasing convex sequence with limit $0$. It is not hard to see that there exists $\delta>0$ such that $16\delta a_n^{1/\alpha}\leq c_n$ for all $n$. It turns out (\cite{nowol}*{Lemma 1.2}) that the condition $\{c_n^{1-\alpha}\}\in l^{1,1/\alpha}$ is equivalent to the condition $\sum_n a_n^{-1+1/\alpha}<\infty$.  The first step in constructing a measure $\mu$ so that Proposition \ref{prop:tool} holds is to use $\mu_0$, the point mass at $0$. Because $\tilde{\mu_0}(x)=1/(\pi x)$, the inequality $d_E^{-\alpha}\leq C(1+|\tilde{\mu_0}|)$ fails on certain intervals centered at the $a_n$. To compensate we add $h \; dx$ to $\mu_0$, where $h$ is an $L^1$ function that is bounded below by a constant multiple of $d_E^{-\alpha}$ on these intervals. It suffices that for all $n$ we have $h(x)\gtrsim |x-a_n|^{-\alpha}$ when $|x-a_n|<\delta a_n^{1/\alpha}$, and summing over $n$ the integral of $|x-a_n|^{-\alpha}$ over this interval yields a constant multiple of $\sum_n a_n^{-1+1/\alpha}$. Thus, the existence of an $L^1$ function $h$ with the required property is implied by the assumption $\{c_n^{1-\alpha}\}\in l^{1,1/\alpha}$---in fact, the two conditions are equivalent. For an appropriate choice of $h$ the measure $h \; dx+\mu_0$ satisfies in addition inequality \eqref{eq:bound} of Proposition \ref{prop:tool}. \hfill\qedsymbol\\

We remark that the preceding construction of the measure $h \; dx+\mu_0$ suggests the necessary condition $\{c_n^{1-\alpha}\}\in l^{1,1/\alpha}$ for peak sets (at least those of this form): Given a compact set $E$  defined in terms of a decreasing convex sequence  $\{a_n\}$ of positive numbers and its limit $0$, the convergence of $\sum_n a_n^{-1+1/\alpha}$ arises as a necessary condition when as above we try to add to $\mu_0$ an $L^1$ majorant of $d_E^{-\alpha}$ near each $a_n$. To make this argument rigorous, one can apply a localization result, Proposition  \ref{prop:local} below, to sets $E$ of this form and thus obtain the necessary condition $\{c_n^{1-\alpha}\}\in l^{1,1/\alpha}$.
The assertion in Theorem \ref{thm:char}(b) that this condition is necessary for general peak sets is more complicated to prove. Similarly, the proof of Theorem \ref{thm:sharp}(a) involves some technicalities, but  Proposition \ref{prop:local} is central to the argument.

For the statement of this proposition, we alter our earlier convention: We write $|U|$ for the Lebesgue measure of a measurable set $U\subset \R$. Given an interval $I\subset \R$ and $\delta >0$, we write $\delta I$ for the interval with the same center as $I$ and length $\delta|I|$. Given $N\in \N$, $\delta>1$, and a collection $\mathcal{I}$ of intervals, we say that $\mathcal{I}$ is $(N,\delta)$-disjoint if no point of $\R$ belongs to more than $N$ intervals of the form $\delta I$ with $I\in \mathcal{I}$. We write $||\mu||$ for the total variation of the finite signed Borel measure $\mu$.

\begin{prop}\label{prop:local}
Given $N\in \N$ and $\delta>1$, there exist finite constants $C_1$ and $C_2$ such that the following holds: Let $\mu$ be a finite signed Borel measure on $\R$. If $h \; dx$ is the absolutely continuous part of $\mu$, define $F=|h|+|\tilde{\mu}|$. Let $\mathcal{I}$ be any $(N,\delta)$-disjoint collection of intervals, and suppose that for every $I\in\mathcal{I}$ a number $\lambda_I>C_1 ||\mu||/|I|$ is given. Then
\begin{equation} \label{eqn:loc} \sum_{I\in\mathcal{I}} \lambda_I |\{x\in I\colon F(x)>\lambda_I\}|\leq C_2 ||\mu||.\end{equation}
\end{prop}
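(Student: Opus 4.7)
The plan is a standard local/non-local decomposition of the measure relative to each interval $I \in \mathcal{I}$, combined with the weak-type $(1,1)$ estimate for the Hilbert transform and the overlap bound coming from $(N,\delta)$-disjointness. Fix $I \in \mathcal{I}$ and decompose $\mu = \mu_I^{\mathrm{loc}} + \mu_I^{\mathrm{far}}$, where $\mu_I^{\mathrm{loc}}$ is the restriction of $\mu$ to $\delta I$ and $\mu_I^{\mathrm{far}}$ is its complement. Note that the absolutely continuous part of $\mu_I^{\mathrm{far}}$ vanishes on $I$ (since $I \subset \delta I$), so on $I$ we have $h = h_I^{\mathrm{loc}}$ and $\tilde\mu = \tilde\mu_I^{\mathrm{loc}} + \tilde\mu_I^{\mathrm{far}}$.

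The first step is to control the far part pointwise on $I$. For $x \in I$ and $\zeta \in \R \setminus \delta I$, the distance $|x - \zeta|$ is at least $(\delta-1)|I|/2$, so
$$|\tilde\mu_I^{\mathrm{far}}(x)| \le \frac{1}{\pi}\int_{\R\setminus \delta I}\frac{d|\mu|(\zeta)}{|x-\zeta|} \le \frac{2}{\pi(\delta-1)}\cdot\frac{\|\mu\|}{|I|}.$$
Choosing $C_1 = 8/(\pi(\delta-1))$ and using the hypothesis $\lambda_I > C_1\|\mu\|/|I|$ guarantees that $|\tilde\mu_I^{\mathrm{far}}(x)| \le \lambda_I/4$ on $I$. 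Consequently, if $F(x) > \lambda_I$ at some $x \in I$, then $|h_I^{\mathrm{loc}}(x)| + |\tilde\mu_I^{\mathrm{loc}}(x)| > 3\lambda_I/4$, and in particular either $|h_I^{\mathrm{loc}}(x)| > \lambda_I/4$ or $|\tilde\mu_I^{\mathrm{loc}}(x)| > \lambda_I/2$.

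The second step applies standard weak-type bounds to the local part. Markov's inequality gives $|\{|h_I^{\mathrm{loc}}| > \lambda_I/4\}| \le 4\|\mu_I^{\mathrm{loc}}\|/\lambda_I$, and Kolmogorov's weak-type $(1,1)$ estimate for the Hilbert transform of a signed measure gives $|\{|\tilde\mu_I^{\mathrm{loc}}| > \lambda_I/2\}| \le C\|\mu_I^{\mathrm{loc}}\|/\lambda_I$. Adding these and multiplying by $\lambda_I$,
$$\lambda_I\,|\{x \in I : F(x) > \lambda_I\}| \;\le\; C'\,\|\mu_I^{\mathrm{loc}}\| \;=\; C'\,|\mu|(\delta I).$$

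The third step assembles the local bounds using $(N,\delta)$-disjointness. Since no point of $\R$ lies in more than $N$ of the dilates $\delta I$ with $I \in \mathcal{I}$, we have $\sum_{I \in \mathcal{I}}\mathbf{1}_{\delta I} \le N$, and so
$$\sum_{I \in \mathcal{I}} \lambda_I\,|\{x \in I : F(x) > \lambda_I\}| \;\le\; C'\sum_{I \in \mathcal{I}} |\mu|(\delta I) \;=\; C'\int \sum_I \mathbf{1}_{\delta I}\,d|\mu| \;\le\; C'N\,\|\mu\|,$$
which yields \eqref{eqn:loc} with $C_2 = C'N$.

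The only subtle step is calibrating $C_1$ so that the far-field contribution to $\tilde\mu$ on $I$ is absorbed into at most, say, one quarter of $\lambda_I$; once this is arranged, everything else is routine. The hypothesis $\delta > 1$ is essential here, since it is what makes the kernel $1/(x-\zeta)$ uniformly integrable against $|\mu|$ for $x \in I$ when the source is in $\R\setminus\delta I$, and it is the mechanism by which $(N,\delta)$-disjointness (rather than mere disjointness of the $I$ themselves) enters the statement.
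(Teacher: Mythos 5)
The survey does not actually reproduce a proof of this proposition; it states the result and refers the reader to Noell--Wolff \cite{nowol} for the argument, so there is no in-text proof to compare against. Your proof is correct and is the natural argument one would expect: split $\mu$ relative to each $I$ into the part on $\delta I$ and the far part, kill the far contribution to $\tilde\mu$ on $I$ by a pointwise bound (this is exactly where $\delta > 1$ and the threshold $\lambda_I > C_1\|\mu\|/|I|$ are used), apply Markov to $h_I^{\mathrm{loc}}$ and Kolmogorov's weak-type $(1,1)$ estimate to $\tilde\mu_I^{\mathrm{loc}}$, and then sum the local bounds $\lambda_I\,|\{x\in I : F>\lambda_I\}|\lesssim |\mu|(\delta I)$ using the overlap bound $\sum_I\mathbf{1}_{\delta I}\le N$. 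All the calibrations check out: $|x-\zeta|\ge(\delta-1)|I|/2$ for $x\in I$, $\zeta\notin\delta I$, so $|\tilde\mu_I^{\mathrm{far}}|\le 2\|\mu\|/(\pi(\delta-1)|I|)<\lambda_I/4$ once $C_1=8/(\pi(\delta-1))$; and $h=h_I^{\mathrm{loc}}$ on $I$ since the absolutely continuous part of $\mu_I^{\mathrm{far}}$ vanishes there. This gives $C_2=(4+2C_{\mathrm{Kolm}})N$, which is precisely the form of dependence on $N$ and $\delta$ the proposition demands.
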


This proposition is used as a local version of the weak-type estimate derived in Section \ref{section:prelim}. A typical application (e.g., in the proof of Theorem \ref{thm:sharp}(a)) is to show that a compact set $E\subset \R$ is not a peak set for $A^\alpha(\H)$ by showing that the conclusion of Proposition \ref{prop:bound} (with $\T$ replaced by $\R$) cannot hold: For some choice of $\mathcal{I}$, the sum obtained by replacing $F$ by $d_E^{-\alpha}$ in the left-hand side of inequality \eqref{eqn:loc} diverges.

Now we focus on nonglobal conditions for peak sets.
Before the results in \cite{nowol}, Bruna \cite{brunapk} had already questioned whether peak sets for $A^\alpha$ could be characterized by a global condition. One nonglobal sufficient condition he gives is the existence of an $L^1$ majorant $\phi$ of $d_E^{-\alpha}$ on $\T$ such that for all $z\in \T\setminus E$ we have
\begin{equation} \label{eq:bruna}
d_E(z)\int_{\T\setminus J_z} \frac{\phi(\zeta)}{|\zeta-z|^2} \; dm(\zeta) \leq C \phi(z).
\end{equation}
Here $J_z=\{\zeta\in\T\colon |\zeta-z|\leq d_E(z)/4\}$.
In part, Bruna's doubt about the possibility of a global characterization was based on the work of Hru\v{s}\v{c}\"{e}v \cite{hruscev} described above, who shows that there is
no global condition characterizing sets of nonuniqueness for the Gevrey classes. Hru\v{s}\v{c}\"{e}v does characterize the sets of nonuniqueness for $G_\alpha$ by the following condition: There exists an $L^1$ majorant $\phi$ of $d_E^{-\alpha}$ on $\T$ such that for all $z\in \T\setminus E$ we have $$\int_{\T\setminus I_z} \frac{\phi(\zeta)}{|\zeta-z|^2} \; dm(\zeta) \leq C \phi(z)^{1+1/\alpha}.$$ Here $I_z$ is the complementary arc containing $z$. (This condition is implied by Bruna's nonglobal sufficient condition for peak sets defined by inequality \eqref{eq:bruna} above.) Of course a condition requiring that $d_E^{-\alpha}$ have a majorant in $L^1$ cannot characterize peak sets, but these two conditions are illustrative.

Bomash \cite{bomash} proved the following: A closed set $E\subset \T$ is a peak set for $A^\alpha$ if and only if there exists a finite positive Borel measure $\mu$ on $\T$ such that $|h+i\tilde{\mu}|^{-1}$ is equal almost everywhere on $\T$ to a function in $\Lambda_\alpha(\T)$ that equals zero on $E$. Here $h \; dm$ is the absolutely continuous part of $\mu$.  (Compare this result with
the discussion in Section \ref{section:prelim} leading to equation \eqref{eq:basic}.) The key part of the proof is to show that if $f\in A$ and $\mbox{Re } f\geq 0$ then $f\in A^\alpha$ if and only if $|f|\in \Lambda_\alpha(\T)$. Bomash also gave a sufficient condition for peak sets expressed in terms of an integral inequality. (It implies that $d_E^{-\alpha}\in L^1(\T)$.) It is not hard to see using the work of Hru\v{s}\v{c}\"{e}v \cite{hruscev} that every nonuniqueness set for $G_\alpha$ satisfies this condition.

We mention a few connections between peak sets and other areas. One of the original motivations for the study of peak sets for $A^\alpha$ was the analysis of the singular spectrum of the self-adjoint Friedrichs model in perturbation theory. See Pavlov \cite{pavlov} and Faddeev-Pavlov  \cite{fadpav}.
 In another direction,  Hamilton \cite{hamilton} studied holomorphic conformal flows and applied his results to peak sets for functions having a modulus of continuity $w(t)$ satisfying $\int_0^1 1/w(t) \; dt=\infty$. He shows that such a peak set is a fixed set for some flow and concludes that it has logarithmic capacity zero. Noell and Wolff \cite{nowol} studied size properties of Cauchy integrals of measures. They define the space $X$ of all real-valued functions $\phi$ on $\R$ for which there exists a finite positive Borel measure $\mu$ on $\R$ such that, if $h \; dx$ is the absolutely continuous part of $\mu$, then $|\phi| \leq |h+i\tilde{\mu}|$. By Proposition \ref{prop:bound}, if $E$ is a peak set for $A^\alpha(\H)$ then $d_E^{-\alpha}$ belongs locally to $X$, in the sense that there exists $\phi\in X$ such that $\phi=d_E^{-\alpha}$ on an open set containing $E$. They prove that, if $E\subset\R$ is compact and $d_E^{-\alpha}$ belongs locally to $X$, then $d_E^{-\alpha}$ belongs locally to $ L^{1,q}(\R)$, where $q=\max(2,1/\alpha)$. They give a characterization when $E$ is a Cantor set with variable ratio of dissection.

For results about peak sets for Lipschitz classes in $\Cn$, see Ababou-Boumaaz \cite{abab}, Saerens \cite{saer}, Cascante \cite{casc}, and Noell \cite{noell}.

\section{Peak sets for the Zygmund class}\label{section:zyg}

In this section we consider peak sets for the space of elements of $A$ having boundary values in the Zygmund class, a space that lies between $A^1$ and $A^\alpha$ for $\alpha<1$. David C. Ullrich proved (personal communication) that these peak sets are finite, just as for $A^1$. Because Ullrich never published this result, we provide much of the proof.

Recall that the Zygmund class $\Lambda_\ast=\Lambda_\ast(\T)$ is the set of all functions $f$ continuous on $\T$ for which $|f(e^{it}\zeta)+f(e^{-it}\zeta)-2f(\zeta)|\leq Ct$ for all $t>0$ and all $\zeta\in\T$. Define $\A$ to be the set of all $f \in A$ such that $f|_{\T} \in \Lambda_\ast$. Ullrich's proof that peak sets for $\A$ are finite depends on the characterization of $\A$ in terms of the Bloch space. Recall that the Bloch space $\B$ for $\D$ is the set of all functions $g$ holomorphic on $\D$ for which $|g'(z)|(1-|z|)$ is bounded on $\D$. Zygmund proved (Pommerenke \cite{pomm}*{Section 7.2}) that $\A$ consists of all primitives of Bloch functions: If $f$ is holomorphic on $\D$, then $f$ extends to be an element of $\A$ if and only if $f'\in \B$.

The proof of Ullrich's result is simplified by working on the upper half-plane $\H$. We write $\A(\H)$ for the corresponding space. The derivative of an element of $\A(\H)$ belongs to
$\BH$, the Bloch space on $\H$. To be explicit, $g\in \BH$ says that the  quantity  $$||g||_{\BH}=\sup\{|g'(z)|\; \mbox{Im } z\colon z\in\H\}$$ is finite. The main tools for Ullrich's proof are the Julia-Carath\'{e}odory theorem and arguments of Ullrich in \cite{ullrichlon}. In a sense, his proof is an elaboration of the argument using the Hopf lemma in Section \ref{section:prelim}: The key point is to control difference quotients along the real line. A couple of preliminary results are necessary.

\begin{lemma}\label{lemma:estbl}

There exists a constant $C$ such that for every $g\in\BH$ we have $$|g(x_1+iy_1)-g(x_2+iy_2)|\leq C (1+\log(y_1/y_2))||g||_{\BH}$$ when $0<y_2\leq y_1$, $x_1$ and $x_2$ are real,  and $|x_1-x_2|\leq y_1$.

\end{lemma}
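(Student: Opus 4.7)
The plan is to estimate $g(x_1+iy_1)-g(x_2+iy_2)$ by integrating $g'$ along a carefully chosen path, using only the pointwise bound $|g'(z)|\leq \|g\|_{\mathcal{B}(\mathbb{H})}/\operatorname{Im} z$ that defines the Bloch norm on $\mathbb{H}$.

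First I would connect the two points by an L-shaped path consisting of a vertical segment from $x_2+iy_2$ up to $x_2+iy_1$, followed by a horizontal segment from $x_2+iy_1$ to $x_1+iy_1$. Both segments lie in $\mathbb{H}$, so $g$ is holomorphic on them, and by the fundamental theorem of calculus $g(x_1+iy_1)-g(x_2+iy_2)$ equals the sum of the integrals of $g'$ along these two pieces.

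On the vertical segment, parametrize by $z=x_2+it$ for $t\in[y_2,y_1]$; then $|g'(z)|\leq \|g\|_{\mathcal{B}(\mathbb{H})}/t$, so the contribution is bounded by
\[
\|g\|_{\mathcal{B}(\mathbb{H})}\int_{y_2}^{y_1}\frac{dt}{t}=\|g\|_{\mathcal{B}(\mathbb{H})}\log(y_1/y_2).
\]
On the horizontal segment, $\operatorname{Im} z=y_1$ throughout, so $|g'(z)|\leq \|g\|_{\mathcal{B}(\mathbb{H})}/y_1$; since the segment has length $|x_1-x_2|\leq y_1$, the contribution is at most $\|g\|_{\mathcal{B}(\mathbb{H})}$. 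Summing the two pieces yields the stated inequality with $C=1$.

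There is no real obstacle here — the Bloch estimate plus the hypothesis $|x_1-x_2|\leq y_1$ are precisely calibrated so that the horizontal leg contributes $O(1)$ and the vertical leg contributes the logarithm. The only thing to watch is that the L-path stays inside $\mathbb{H}$, which is automatic because $y_2>0$ and $y_1\geq y_2$, so every point on the path has positive imaginary part.
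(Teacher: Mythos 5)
Your proof is correct and recovers the lemma with the explicit constant $C=1$. It is essentially the same in spirit as the paper's sketch (both integrate $g'$ and estimate the integrand pointwise via $|g'(z)|\leq \|g\|_{\BH}/\operatorname{Im}z$), but you replace the paper's straight segment plus normalization to $x_1+iy_1=i$ by an L-shaped path (vertical then horizontal). This is a small but real simplification: the L-path cleanly separates the logarithmic contribution (vertical leg, where $\operatorname{Im}z$ varies) from the $O(1)$ contribution (horizontal leg, where $\operatorname{Im}z\equiv y_1$ and the length is controlled by the hypothesis $|x_1-x_2|\leq y_1$), so no translation/dilation normalization is needed and the two terms in $1+\log(y_1/y_2)$ appear immediately. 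Since $g'$ is holomorphic on $\H$ the integral is path-independent, so the choice of path is purely a matter of convenience, and your choice is the more transparent one.
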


\begin{proof}[Sketch of proof]

The left-hand side is the absolute value of the integral from $x_2+iy_2$ to $x_1+iy_1$ of $g'$. Parametrize, move the absolute value inside the integral, estimate the integrand using the definition of $||g||_{\BH}$, and calculate the resulting integral. The process is simplified somewhat by first reducing to the case $x_1+iy_1=i$. This is possible because
  $||g||_{\BH}$ is invariant under horizontal translation and vertical dilation on $\H$.
\end{proof}

Next we prove a lemma relating the horizontal and vertical difference quotients at $0$ of $f\in \A(\H)$.

\begin{lemma}\label{lemma:estdif}

There exists a constant $C$ such that for every $f\in \A(\H)$ and every real $t\neq 0$ we have $$\left|\frac{f(i|t|)-f(0)}{i|t|}-\frac{f(t)-f(0)}{t}\right|\leq C ||f'||_{\BH}.$$

\end{lemma}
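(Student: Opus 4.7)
The plan is to express both difference quotients as averages of $f'$ along paths in $\overline{\H}$ (using that $f$ is a holomorphic antiderivative of $f'$ on the simply connected domain $\H$), and then compare those averages pointwise using the logarithmic Bloch bound of Lemma \ref{lemma:estbl}. Set $y:=|t|$ and $\sigma:=t/|t|\in\{\pm 1\}$, so $t=\sigma y$.

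First, I would integrate $f'$ along the vertical segment from $i\varepsilon$ to $iy$ and let $\varepsilon\to 0^+$, obtaining
$$Q_1:=\frac{f(iy)-f(0)}{iy}=\frac{1}{y}\int_0^y f'(is)\,ds.$$
Next, I would integrate $f'$ along the polygonal path $i\varepsilon\to iy+i\varepsilon\to\sigma y+i\varepsilon$ and let $\varepsilon\to 0^+$ to obtain
$$f(\sigma y)-f(0)=i\int_0^y f'(is)\,ds+(\sigma-i)\int_0^y f'(\sigma s+i(y-s))\,ds.$$
Dividing by $t=\sigma y$, setting $J:=\frac{1}{y}\int_0^y f'(\sigma s+i(y-s))\,ds$, and using $\sigma^{-1}=\sigma$, this becomes $Q_2=i\sigma\,Q_1+(1-i\sigma)\,J$. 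The identity $1-i\sigma=-i\sigma(1+i\sigma)$ (valid since $\sigma^2=1$) then rearranges to $Q_1-Q_2=(1-i\sigma)(Q_1-J)$, so $|Q_1-Q_2|=\sqrt{2}\,|Q_1-J|$.

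To bound $|Q_1-J|\le \frac{1}{y}\int_0^y |f'(is)-f'(\sigma s+i(y-s))|\,ds$, I would apply Lemma \ref{lemma:estbl} to $g=f'$ at the points $z_1=is$ and $z_2=\sigma s+i(y-s)$, taking $(y_1,y_2)=(\max(s,y-s),\min(s,y-s))$ and noting that $|x_1-x_2|=s\le y_1$ in both regimes $s\le y/2$ and $s\ge y/2$. This yields the pointwise bound $|f'(is)-f'(\sigma s+i(y-s))|\le C(1+|\log(s/(y-s))|)\|f'\|_{\BH}$, and the substitution $u=s/y$ reduces the remaining integral to $\int_0^1(1+|\log(u/(1-u))|)\,du$, which is a finite absolute constant.

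The main technical point is justifying the two integral representations when one endpoint lies on the real boundary, where $f'$ is not defined. The $\varepsilon$-approximation arguments in the first two steps go through provided $|f'|$ is integrable along paths approaching the boundary at a single endpoint; this follows by applying Lemma \ref{lemma:estbl} once more (with $z_1=i$ and $z_2=is$ as $s\to 0^+$, and analogously for the diagonal segment near $s=y$) to produce the majorant $|f'(is)|\lesssim 1+\log(1/s)$, which is integrable near $s=0$ and legitimizes the passage to the limit by dominated convergence, with continuity of $f$ on $\overline{\H}$ handling the left-hand sides.
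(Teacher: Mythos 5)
Your argument is correct, and it takes a genuinely different route from the paper's. The paper also writes both difference quotients as path averages of $f'$, but its second path is the semicircle $\omega_t(s)=\tfrac{t}{2}(1-e^{-i\pi s})$ meeting $\R$ orthogonally at $0$ and $t$, and the key move is to compare \emph{each} average separately to the reference value $f'(it)$ (two applications of the triangle inequality, each controlled via Lemma~\ref{lemma:estbl}); that choice of semicircle is then reused in Lemma~\ref{lemma:avg}. You instead use the polygonal path $0\to iy\to \sigma y$, observe the exact algebraic relation $Q_1-Q_2=(1-i\sigma)(Q_1-J)$, and then bound $|Q_1-J|$ by a single pointwise comparison of the two integrands via Lemma~\ref{lemma:estbl}, ending with the absolute constant $\int_0^1\bigl(1+|\log(u/(1-u))|\bigr)\,du$. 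Your version avoids the reference-point trick and handles both signs of $t$ uniformly via $\sigma$; the paper's version has the advantage that the same semicircular contour does double duty for the averaging argument in Lemma~\ref{lemma:avg}. Two small remarks: the identity $1-i\sigma=-i\sigma(1+i\sigma)$ you cite is not actually needed to get $Q_1-Q_2=(1-i\sigma)(Q_1-J)$ (it is immediate by factoring), and your treatment of the boundary-integrability of $f'$ along the approximating paths, while brief, is sound since Lemma~\ref{lemma:estbl} yields a logarithmic, hence integrable, majorant near each real endpoint.
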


\begin{proof} We assume that $t>0$. Define $\gamma_t$ and $\omega_t$ from $[0,1]$ to $\overline{\H}$ by $\gamma_t(s)=its$ and $\omega_t(s)=\frac{t}{2}(1-e^{-i\pi s})$. Thus $\omega_t$ traces the semicircle meeting the real axis orthogonally at $0$ and $t$.

We claim that it suffices to prove the following inequalities: There exists a constant $C$ such that if $g\in \BH$ and $t>0$, then
$$\left|\frac{1}{it}\int_{\gamma_t} g(z)\; dz -g(it)\right|\leq C ||g||_{\BH}$$
and
$$\left|\frac{1}{t}\int_{\omega_t} g(z)\; dz -g(it)\right|\leq C ||g||_{\BH}.$$
In fact, given $f\in \A(\H)$ we need only apply these two inequalities with $g=f'$, integrate, and apply the triangle inequality to obtain the desired conclusion.

We sketch the proof of the first inequality. (The proof of the second is similar.) Note that
$$\frac{1}{it}\int_{\gamma_t} g(z)\; dz -g(it)=\frac{1}{it}\int_{\gamma_t} (g(z)-g(it))\; dz.$$
Now use the parametrization, apply Lemma \ref{lemma:estbl}, and integrate the resulting quantity.
 \end{proof}

 The following result is essentially a consequence of the arguments in Ullrich \cite{ullrichlon}, but for convenience we give a proof here. It uses the semicircular arc introduced in the proof of Lemma \ref{lemma:estdif}.

 \begin{lemma}\label{lemma:avg}

Assume that $g\in \BH$ and $g$ has nontangential limit $L$ at 0. For $t\neq 0$ define $\omega_t$ from $[0,1]$ to $\overline{\H}$ by $\omega_t(s)=\frac{t}{2}(1-e^{-i\pi s})$. Then
$$\lim_{t\rightarrow 0} \; \frac{1}{t}\int_{\omega_t} g(z)\; dz = L.$$

\end{lemma}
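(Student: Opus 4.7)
My plan is to parametrize the integral explicitly and pass to the limit by dominated convergence after subtracting off the constant $L$. Using $\omega_t'(s)=\tfrac{i\pi t}{2}e^{-i\pi s}$ together with the elementary identity $\int_0^1 e^{-i\pi s}\,ds=2/(i\pi)$, a change of variables reduces the claim to
\[
\int_0^1 \bigl(g(\omega_t(s))-L\bigr)\,e^{-i\pi s}\,ds \;\longrightarrow\; 0\qquad(t\to 0).
\]
I would treat $t>0$; the $t<0$ case is symmetric, since reflecting the semicircle through the origin produces the analogous arc in $\overline{\H}$.

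For the pointwise limit on $(0,1)$ I would use the factored form $\omega_t(s)=it\sin(\pi s/2)\,e^{-i\pi s/2}$, so that for each fixed $s\in(0,1)$ the point $\omega_t(s)$ tends to $0$ along the ray of argument $\pi(1-s)/2\in(0,\pi/2)$. This ray sits inside a Stolz region based at $0$, so the nontangential-limit hypothesis yields $g(\omega_t(s))\to L$. For the dominating majorant I would apply Lemma \ref{lemma:estbl} with comparison point $it$ — legitimate because both $\Im\omega_t(s)=\tfrac{t}{2}\sin(\pi s)\le t$ and $|\Re\omega_t(s)|\le t$ — to obtain
\[
|g(\omega_t(s))-g(it)|\;\le\;C\bigl(1+\log(2/\sin\pi s)\bigr)||g||_{\BH}.
\]
Since $g(it)\to L$, the term $|g(it)|$ is uniformly bounded for small $t$, and $\log(2/\sin\pi s)$ is integrable on $[0,1]$ (with only logarithmic singularities at $s=0$ and $s=1$). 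This furnishes a $t$-independent $L^1$ majorant, and dominated convergence then closes the argument.

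The main obstacle is that $\omega_t$ meets the real axis orthogonally at its terminal endpoint $z=t$, so the Stolz angle associated to the ray of approach of $\omega_t(s)$ to $0$ degenerates as $s\to 1$ and no single Stolz region encloses the entire contour. Consequently the nontangential-limit hypothesis cannot be invoked uniformly in $s$. The logarithmic Bloch estimate of Lemma \ref{lemma:estbl} is precisely what absorbs this degeneration into an integrable majorant and makes the DCT step go through; without it, the endpoint behavior at $s=1$ would genuinely obstruct the limit.
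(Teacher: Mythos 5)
Your proof is correct, and it takes a genuinely different (and somewhat slicker) route than the paper's. The paper fixes a small parameter $\delta$, splits the semicircle at $\omega_t(1-\delta)$, handles the long piece by the nontangential limit, handles the short endpoint piece using both its shortness and the logarithmic Bloch oscillation estimate, and then sends $\delta\to 0$ in a $\limsup$. You instead parametrize once and for all and invoke dominated convergence: the pointwise nontangential limit at each fixed $s\in(0,1)$ gives the pointwise convergence, and Lemma \ref{lemma:estbl} (applied with comparison point $it$, whose hypotheses you correctly verify) supplies the $t$-independent majorant $C\bigl(1+\log(2/\sin\pi s)\bigr)||g||_{\BH}+O(1)$, which is integrable because the singularities at $s=0,1$ are only logarithmic. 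Both arguments rest on the same two ingredients --- nontangential convergence in the interior of the parameter interval, and the logarithmic Bloch estimate to tame the tangential endpoint $s=1$ --- but your packaging via DCT avoids the auxiliary $\delta$-split and the subsequent $\limsup$ bookkeeping, at the cost of producing the explicit integrable majorant. One small remark applying to both proofs: for $t<0$ the formula $\omega_t(s)=\tfrac{t}{2}(1-e^{-i\pi s})$ actually lands in the lower half-plane, so the ``symmetric'' case needs the complex-conjugate arc; both you and the paper quietly restrict to $t>0$ and leave that adjustment implicit.
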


\begin{proof} For simplicity we assume that $L=0$ and restrict our attention to positive $t$. Fix a small positive number $\delta$. Split $\omega_t$ into two contours by defining $\alpha_t$ to be its restriction to $[0,1-\delta]$ and $\beta_t$ its restriction to $[1-\delta,1]$. Because $g$ has nontangential limit $0$ at $0$, we have
$$\lim_{t\searrow 0} \; \frac{1}{t}\int_{\alpha_t} g(z)\; dz = 0.$$

To estimate the integral over $\beta_t$ we write $p(t)=\omega_t(1-\delta)$ and $\lambda(t)=\int_{\beta_t}  dz$. Then $\lim_{t \searrow 0} g(p(t))=0$ because $g$ has nontangential limit $0$ at $0$. Comparing the chord length to the arc length gives
$$|\lambda(t)|=\frac{t}{2}|e^{i\pi\delta}-1|\leq \frac{\pi \delta t}{2}.$$ As in the proof of Lemma \ref{lemma:estdif}, there exists a constant $C>0$ independent of $\delta$ such that
$$\left|\frac{1}{\lambda(t)}\int_{\beta_t} g(z) \; dz - g(p(t)) \right|\leq C||g||_{\BH}.$$
Thus,
$$\left|\frac{1}{t}\int_{\beta_t} g(z) \; dz \right|\leq\frac{|\lambda(t)|}{t}(C||g||_{\BH}+|g(p(t))|)\leq \frac{\pi \delta}{2}(C||g||_{\BH}+|g(p(t))|).$$ Hence,
$$\limsup_{t\searrow 0} \left|\frac{1}{t}\int_{\beta_t} g(z) \; dz \right|\leq \frac{\pi \delta}{2} C||g||_{\BH}.$$
This proves the desired result.
 \end{proof}

Now we can prove the main result of this section.

\begin{thm}[Ullrich]\label{thm:ull}
Every peak set for $\A(\H)$ is finite.

\end{thm}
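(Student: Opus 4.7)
My plan is to run the Hopf-lemma-style argument of Section \ref{section:prelim} in the Zygmund/Bloch setting, using the Julia-Carath\'{e}odory theorem in place of the classical normal derivative of $\Re g$, with Lemmas \ref{lemma:estdif} and \ref{lemma:avg} serving as bridges: the first between horizontal and vertical difference quotients, the second between interior nontangential behaviour of $g'$ and boundary behaviour of $g$. Suppose for contradiction that some $p\in E$ is not isolated in $E$; by a horizontal translation assume $p=0$, and pick $\{t_n\}\subset E\setminus\{0\}$ with $t_n\to 0$. After passing to a subsequence (and handling $t_n\to 0^-$ by an analogous reflection argument if necessary) assume $t_n>0$. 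Let $g\in\A(\H)$ be a strong support function for $E$ and set $G=ig$. Since $\Re g>0$ strictly on $\H$ (by the minimum principle, as $g\not\equiv 0$), $G$ is a nonconstant holomorphic map $\H\to\H$ with $G(0)=0$, $G\in\A(\H)$, and $G'=ig'\in\BH$.

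Because $g(0)=g(t_n)=0$, Lemma \ref{lemma:estdif} applied to $g$ at $t_n$ gives
$$\left|\frac{G(it_n)}{it_n}\right|=\left|\frac{g(it_n)}{it_n}\right|\leq C\|g'\|_{\BH}.$$
In particular $\Im G(it_n)/t_n$ is bounded, so $\liminf_{y\to 0^+}\Im G(iy)/y<\infty$. This is exactly the hypothesis of the Julia-Carath\'{e}odory theorem for the self-map $G\colon\H\to\H$ at the boundary point $0$: we obtain a finite angular derivative $L$ with $G(z)/z\to L$ and $G'(z)\to L$ as $z\to 0$ nontangentially. Since $G$ is nonconstant, Julia's inequality forces $L>0$.

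Now apply Lemma \ref{lemma:avg} to the Bloch function $G'$, whose nontangential limit at $0$ is $L$: for real $t>0$,
$$L=\lim_{t\searrow 0}\frac{1}{t}\int_{\omega_t}G'(z)\,dz=\lim_{t\searrow 0}\frac{G(t)-G(0)}{t}=\lim_{t\searrow 0}\frac{G(t)}{t},$$
where the middle equality uses continuity of $G$ on $\overline{\H}$ together with the fundamental theorem of calculus along the smooth arc $\omega_t$. Evaluating along the subsequence $t=t_n$ gives $G(t_n)/t_n=ig(t_n)/t_n=0$, forcing $L=0$ and contradicting $L>0$. Hence every point of $E$ is isolated, so $E$ is finite. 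The two substantive steps---extracting the finite-liminf hypothesis of Julia-Carath\'{e}odory from the boundary vanishing of $g$ (Lemma \ref{lemma:estdif}), and transferring the interior nontangential limit of $G'$ into the boundary limit $G(t)/t\to L$ (Lemma \ref{lemma:avg})---are exactly what the preceding three lemmas are designed to supply; the main conceptual obstacle is that, in the absence of a classical derivative for $g$ at a peak point, Julia-Carath\'{e}odory combined with the strict Julia inequality is what must replace the Hopf-lemma conclusion $g'(0)\neq 0$ from Section \ref{section:prelim}.
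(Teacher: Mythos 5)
Your proof is correct and follows essentially the same route as the paper's: both arguments hinge on the Julia--Carath\'eodory theorem, with Lemma \ref{lemma:estdif} ruling out an infinite angular derivative when the peak point fails to be isolated and Lemma \ref{lemma:avg} transferring the nontangential limit of the derivative into the horizontal difference quotient. The only cosmetic differences are that you work with the strong support function $g$ (twisted into the self-map $G=ig$ of $\H$) rather than the peak function $f\colon\H\to\D$ directly, and you organize the argument as a single contradiction rather than the paper's case split on $L=\infty$ versus $L<\infty$.
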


\begin{proof}
Let $E$ be a peak set for $\A(\H)$. We assume for simplicity that $0\in E$ and show that $0$ is an isolated point of $E$. Let $f\in\A(\H)$ be a peak function for $E$.
We use the Julia-Carath\'{e}odory theorem (Ahlfors \cite{ahlfors}*{Theorem 1-5} or Pommerenke \cite{pomm}*{Propositions 4.7 and 4.13}): The difference quotient
$\displaystyle \frac{f(z)-1}{z}$ has nontangential limit $L\in(0,\infty]$ for $z\rightarrow 0$, and if $L<\infty$ then $f'$ has nontangential limit $L$ at $0$.

In case $L=\infty$,
it follows from Lemma \ref{lemma:estdif} and the reverse triangle inequality that $$\left| \frac{f(t)-1}{t} \right|\geq  \left|\frac{f(i|t|)-1}{i|t|}\right| -C||f'||_{\BH}\rightarrow \infty$$ as $t\rightarrow 0$ (with $t$ real). Because $E=f^{-1}(1)$, we see in this case that $0$ is an isolated point of $E$, as desired.

Now assume that $L<\infty$,  so $f'$ has nontangential limit $L$ at $0$. We apply Lemma \ref{lemma:avg} with $g=f'$: As $t\rightarrow 0$ (with $t$ real),
$$\frac{f(t)-1}{t}=\frac{1}{t}\int_{\omega_t} f'(z)\; dz\rightarrow  L.$$ Because $L>0$, it follows that $0$ is an isolated point of $E$.
\end{proof}

Ullrich asked whether the cardinality of a peak set for $\A$ can be bounded in terms of the appropriate norm of a peak function. As we noted in Section \ref{section:prelim}, there is  such a bound for $A^1$.

\section{Boundary interpolation sets for $A^\alpha$}\label{section:interp}

In this section we present results about boundary interpolation sets for $A^\alpha$ if $\alpha>0$. We also list results for some other classes of functions. We remark that most of the interpolation problems in this section were originally studied for subsets of $\overline{\D}$ (not just subsets of $\T$), in which case the results and proofs are more complicated.

 Let $E$ be a closed subset of $\T$ with measure zero. In Section \ref{section:prelim} we saw that a necessary condition for interpolation is that $\log d_E$ be integrable on $\T$. That is a global condition, but in this section we will see a local condition on
 $d_E$ that is necessary and sufficient for interpolation. This condition was introduced by Koto\v{c}igov \cite{kotocigov} in the context of interpolation problems involving elements of $A$ having derivatives in a Hardy space.

\begin{defn} Let $E$ be a closed subset of $\T$. We say that $E$ is porous, or satisfies condition (K), if there exists a positive constant $C$ such that for every arc $I\subset \T$ we have $\sup\{d_E(z)\colon z\in I\}\geq C|I|$.
\end{defn}

Every porous set has measure zero. If (as in Section \ref{section:prelim}) we write $\T\setminus E$ as the disjoint union of a sequence of open arcs, say $\{I_n\}$, then porosity says the following: There exists a positive constant $C$ such that for every  arc $I\subset\T$ with endpoints in $E$ there exists $n$ such that $I_n\subset I$ and $|I_n|\geq C|I|$. Equivalent formulations of porosity can be found in Dyn{\textsc{\char13}}kin \cite{dynkin} (see below on interpolation for the Gevrey  classes) and Bruna \cite{brunaint}*{Theorem 3.1}.

Considered as a subset of $\T$, the Cantor set is porous (as are all Cantor sets with variable ratio of dissection). Other examples given by Dyn{\textsc{\char13}}kin \cite{dynkin} are of the form $E=\{1\}\cup\{\exp(i\psi(n))\colon n=1, 2, \ldots \}$. If $\psi$ is a decreasing exponential function, $E$ is porous; if $\psi$ is a decreasing power function, $E$ is not porous.

Dyn{\textsc{\char13}}kin \cite{dynkin} proved that porosity characterizes boundary interpolation sets for $A^\alpha$ if $0<\alpha<1$.

\begin{thm}[Dyn{\textsc{\char13}}kin]\label{thm:dyn}  Let $E$ be a closed subset of $\T$, and let $0<\alpha<1$. Then $E$ is an interpolation set for $A^\alpha$ if and only if $E$ is porous. \end{thm}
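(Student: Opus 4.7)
The plan is to prove the two implications of Theorem \ref{thm:dyn} separately. Necessity will come from a closed-graph argument combined with test functions that witness any failure of porosity; sufficiency will come from an explicit construction based on a Whitney decomposition of $\D$ adapted to $E$, followed by a $\bar\partial$-correction with porosity-dependent estimates.

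For the necessity direction I would argue by contrapositive. If $E$ is a boundary interpolation set for $A^\alpha$, then a standard application of the open mapping theorem to the surjection of $A^\alpha$ onto the restriction space $\Lambda_\alpha(\T)|_E$ yields a bounded linear extension operator $T \colon \Lambda_\alpha(\T) \to A^\alpha$ with $T\phi|_E = \phi|_E$. Now suppose $E$ is not porous: there are arcs $I_k$ with endpoints in $E$ such that $\sup_{z \in I_k} d_E(z) \leq |I_k|/k$. The density of $E \cap I_k$ in $I_k$ means that prescribed $\Lambda_\alpha$-data on $E \cap I_k$ nearly determines $\phi$ on $I_k$ by continuity, while holomorphicity of $T\phi$ forces a Hilbert-transform-type relation linking values on $E \cap I_k$ with values elsewhere on $\T$. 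I would exploit this tension by constructing $\phi_k \in \Lambda_\alpha(\T)$ of unit norm, concentrated near $I_k$, whose prescribed values on $E \cap I_k$ force every $A^\alpha$-extension to have norm at least $c_k \to \infty$. Superimposing $\{\phi_k\}$ on a well-separated subsequence of arcs yields a single $\phi \in \Lambda_\alpha(\T)$ contradicting the boundedness of $T$.

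For sufficiency, fix porous $E$ and $\phi \in \Lambda_\alpha(\T)$. Take a Whitney decomposition of $\D$ into boxes $Q$ with $\operatorname{diam}(Q) \approx \operatorname{dist}(Q, E \cup \T)$. Choose a sample point $a_Q \in \T$ near each $Q$ and a smooth partition of unity $\{\psi_Q\}$ with $|\nabla \psi_Q| \lesssim 1/\operatorname{diam}(Q)$, and set
\[
\Phi(z) = \sum_Q \phi(a_Q)\,\psi_Q(z), \qquad z \in \overline{\D}.
\]
Then $\Phi \in \Lambda_\alpha(\overline{\D})$ extends $\phi$, and since $\sum_Q \bar\partial \psi_Q = 0$, a telescoping argument together with $|\phi(a_Q) - \phi(a_{Q'})| \lesssim |a_Q - a_{Q'}|^\alpha \lesssim d_E(z)^\alpha$ for adjacent boxes gives $|\bar\partial \Phi(z)| \lesssim d_E(z)^{\alpha - 1}$ throughout $\D$. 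Solving $\bar\partial u = \bar\partial \Phi$ so that $u \in \Lambda_\alpha(\overline{\D})$ and $|u(z)| \lesssim d_E(z)^\alpha$, the function $f = \Phi - u$ is holomorphic on $\D$, lies in $\Lambda_\alpha(\overline{\D})$, and agrees with $\phi$ on $E$ because $u$ vanishes continuously on $E$.

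The principal obstacle is producing the $\bar\partial$-solution $u$ with simultaneous $\Lambda_\alpha$-regularity and pointwise decay $|u(z)| \lesssim d_E(z)^\alpha$. The naive Cauchy--Green solution is $\Lambda_\alpha$ on $\overline{\D}$ by the standard Hölder estimate applied to our bound on $\bar\partial \Phi$, but it need not vanish on $E$; subtracting a holomorphic correction to fix this is essentially the interpolation problem we are trying to solve, so one cannot close the argument naïvely. The escape is to use the specific structure of $\bar\partial \Phi$ coming from the Whitney construction. At a point $\zeta \in E$, the Cauchy--Green integral is analyzed box-by-box, and porosity is precisely the condition ensuring that the Whitney boxes at distance $r$ from $\zeta$ have total content comparable to $r^2$ and that the associated contributions to both the value and the Hölder modulus of $u$ form a convergent geometric series in $r$. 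This yields the bound $|u(\zeta)| \lesssim d_E(\zeta)^\alpha$ (vanishing on $E$) as well as the $\Lambda_\alpha$-estimate on $\overline{\D}$. Without porosity the Whitney boxes accumulating at certain points of $E$ fail to shrink geometrically, the summation diverges, and the construction breaks down --- in agreement with the obstruction produced in the necessity direction.
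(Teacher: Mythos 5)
Your high-level outline (open mapping theorem for necessity, Whitney extension plus a $\bar\partial$-correction for sufficiency) is in the right spirit, but both halves have serious problems.

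For sufficiency, the gap you identify in your own text is fatal, and the ``escape'' you propose does not work. The Cauchy--Green solution $u$ of $\bar\partial u = \bar\partial\Phi$ is a specific integral, and there is no reason for it to vanish on $E$. Your bound $|\bar\partial\Phi(w)| \lesssim d_E(w)^{\alpha-1}$ is fine, but when $\zeta\in E$ one has $d_E(w)\leq|w-\zeta|$, so $d_E(w)^{\alpha-1}\geq|w-\zeta|^{\alpha-1}$ and the integral $\int_{\D} d_E(w)^{\alpha-1}|w-\zeta|^{-1}\,dA(w)$ is merely finite, not zero; a Whitney-box-by-box refinement of the same computation cannot change that. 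The missing idea, which is the heart of Dyn{\textsc{\char13}}kin's construction (and of Bruna's variant), is to introduce an outer function $G$, holomorphic on $\D$, smooth and nonvanishing on $\overline{\D}\setminus E$, vanishing to prescribed order on $E$, with controlled derivative growth near $E$. One then solves $\bar\partial u = \bar\partial\Phi/G$ (not $\bar\partial u = \bar\partial\Phi$) and sets $f = \Phi - uG$. Since $G=0$ on $E$, the term $uG$ vanishes there automatically regardless of the boundary behavior of $u$, so $f=\phi$ on $E$; holomorphy follows from $\bar\partial(uG)=G\bar\partial u=\bar\partial\Phi$; and the $\Lambda_\alpha$ bound comes from balancing the growth of $u$ against the decay of $G$. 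Producing $G$ with the right vanishing and regularity is exactly where porosity is used, and it is the main technical content of the sufficiency proof. Without $G$, what you are asking for --- a $\bar\partial$-solution that also vanishes on $E$ with Hölder control --- is equivalent to the interpolation problem itself, as you note, and no Whitney bookkeeping closes that circle.

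For necessity, the observation that the open mapping theorem upgrades interpolability to interpolation with norm control is correct and is where the paper also starts. But your construction of witnesses --- functions $\phi_k$ ``concentrated near $I_k$'' whose $A^\alpha$-extensions must have large norm, justified by a loosely stated ``Hilbert-transform-type relation,'' then superimposed --- is not an argument; it is a wish. The paper's actual argument is concrete and does not require superposition: for each arc $I$ take $\phi_I(z)=1/(z-p_I)$ on $E$, where $p_I$ is the center of a curvilinear rectangle $R_I$ sitting over $I$; verify $\|\phi_I\|_{\Lambda_\alpha(E)}\lesssim|I|^{-1-\alpha}$; extend with bounds to $f\in A^\alpha$; form $g=1-(z-p_I)f$, which vanishes on $E$ and satisfies $g(p_I)=1$; estimate $|g|\lesssim 1$ on $\partial R_I$ and $|g(z)|\lesssim(d_E(z)/|I|)^\alpha$ on $I$; then apply Jensen's inequality at $p_I$ with harmonic measure of $R_I$ to conclude $\sup_{z\in I}d_E(z)\gtrsim|I|$. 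This is a per-arc quantitative argument, and the harmonic-measure/Jensen step is what makes the ``tension'' you gesture at into an actual inequality. You would need to supply something of that concreteness.
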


Note that the characterization does not depend on $\alpha$.
First we prove that porosity is necessary for interpolation. Then we make a few comments about proving sufficiency.

\begin{proof}[Proof of necessity of porosity]

Observe that if we can interpolate from a set then we can interpolate with bounds, in the following sense.
If $L\subset \C$ is a compact set and $\phi\in \Lambda_\alpha(L)$, we put $$||\phi||_{\Lambda_\alpha(L)}=\sup\{|\phi(z)|\colon z\in L\}+\sup\left\{\frac{|\phi(z)-\phi(w)|}{|z-w|^\alpha}\colon z,w\in L, z\neq w\right\}.$$
Then $\Lambda_\alpha(L)$ is complete in this norm. Further, $A^\alpha$ is a closed subspace of $\Lambda_\alpha(\overline{\D})$, so $A^\alpha$ is complete in the norm it inherits. Now let $E\subset\T$ be an interpolation set for $A^\alpha$. The bounded linear transformation that restricts an element of $A^\alpha$ to $E$ maps onto $\Lambda_\alpha(E)$, so by the open mapping theorem that transformation is open. Hence, there is a constant $C$ such that for every $\phi\in \Lambda_\alpha(E)$ there exists $f\in A^\alpha$ interpolating $\phi$ with $||f||_{A^\alpha}\leq C ||\phi||_{\Lambda_\alpha(E)}.$
We remark that the same argument applies to boundary interpolation sets for the disc algebra $A$.

Fix an interpolation set $E\subset \T$ for $A^\alpha$. Given an arc $I\subset \T$, we write $R_I$ for the curvilinear rectangle $\{z\in\overline{\D}\colon 1-|z| \leq |I|, z/|z|\in I\}$. We denote the center of $R_I$ by $p_I$, and we define $\phi_I(z)=1/(z-p_I)$ for $z\in E$. In what follows we drop the subscripts involving $I$, but all constants are independent of $I$.

We claim that $||\phi||_{\Lambda_\alpha(E)}\lesssim |I|^{-1-\alpha}$. To see this, note first that
$d_E(p)\gtrsim |I|$ (because $d_{\T} (p)\gtrsim |I|$),
 so $|\phi|\lesssim |I|^{-1}$. In light of this estimate, to complete the proof of the claim we may assume that two given distinct points
 $z,\zeta \in E$ satisfy $|z-\zeta|\leq |I|$. Then
$$\frac{|\phi(z)-\phi(\zeta)|}{|z-\zeta|^\alpha}=\frac{|z-\zeta|^{1-\alpha}}{|z-p||\zeta-p|}\lesssim |I|^{1-\alpha}|I|^{-2}=|I|^{-1-\alpha}.$$

It follows from the claim and the observation about interpolating with bounds that there exists $f\in A^\alpha$ such that $f=\phi$ on $E$ and $$||f||_{A^\alpha}\leq C ||\phi||_{\Lambda_\alpha(E)}\lesssim |I|^{-1-\alpha}.$$
Put $g(z)=1-(z-p)f(z)$, so $g(p)=1$ and $g=0$ on $E$. Porosity will follow from applying the Jensen formula on $R$ to $\log |g|$ using harmonic measure, but first we need to prove another claim.

We claim that $|g|\lesssim 1$ on $\partial R$ and $|g(z)|\lesssim (d_E(z)/|I|)^\alpha$ for $z\in I$. We prove only the second part, and in view of the definition of porous we may assume that $E\cap I$ is nonempty. Let $z\in I$. If $\zeta\in E\cap I$, then, because $|z-p|\lesssim |I|$ and $f(\zeta)=1/(\zeta-p)$,
$$|g(z)|=|z-p||f(z)-1/(z-p)|\lesssim |I|(|f(z)-f(\zeta)|+|1/(z-p)-1/(\zeta-p)|).$$ Now, using the preceding estimate for $||f||_{A^\alpha}$, we get $$|f(z)-f(\zeta)|\leq ||f||_{A^\alpha} |z-\zeta|^\alpha\lesssim |I|^{-1-\alpha}|z-\zeta|^\alpha.$$ A similar estimate holds for the other term. Because these estimates hold for all $\zeta\in E\cap I$, we conclude that, as claimed,
$$|g(z)|\lesssim |I| |I|^{-1-\alpha}d_E^\alpha (z) = (d_E(z)/|I|)^\alpha.$$

Now let $\omega$ denote the harmonic measure at $p$ relative to $R$ (\cite{krantz}*{Chapter 6} or \cite{pomm}*{Section 4.4}). Applying the Jensen formula, integrating over $\partial R\setminus I$ and $I$, and using the estimates in the immediately preceding claim, we get a constant $C_1$  such that
$$0=\log|g(p)|\leq \int_{\partial R} \log|g| \; d\omega\leq C_1+\alpha \omega(I) \log\frac{\sup_{z\in I} d_E(z)}{|I|}.$$ It is easy to see that $\omega(I)\gtrsim 1$, and it follows immediately that $\sup_{z\in I} d_E(z)\gtrsim |I|$.
\end{proof}

With regard to the sufficiency of porosity for interpolation, we state a general method of interpolation for functions with some degree of regularity, one that is useful in $\Cn$ as well.
For simplicity assume that the function $\phi$ we want to interpolate from a given porous set $E$ is smooth on $\T$. For a given function $\psi(z)$ we write
$\overline{\partial}\psi$ for $\frac{1}{2}({\partial \psi}/{\partial x}+i{\partial \psi}/{\partial y})$. First use Harvey-Wells \cite{harwel}*{Lemma 1.6} (which depends on  Whitney's extension theorem \cite{whitney}) to find a smooth extension $\Phi$ of $\phi$ such that
 $\overline{\partial}\Phi$ and its derivatives of all orders vanish on $E$. Next show that porosity allows the construction of a function $G$  that is holomorphic on $\D$, extends to be smooth and nonzero on $\overline{\D}\setminus E$, and vanishes on $E$ to a prescribed order, with control on the growth of its derivatives near $E$. Then find a solution $u$ of the equation $\overline{\partial}u=\overline{\partial}\Phi/G$ with control on the growth of $u$ and its partial derivatives near $E$; here one uses the standard integral solution of the $\overline{\partial}$-equation, porosity, and properties of $G$. Now if we define $f=\Phi-uG$ then $f$ is holomorphic on $\D$ (because $u$ solves the $\overline{\partial}$-equation and $G$ is holomorphic), $f$ agrees with $\phi$ on $E$, and the appropriate regularity of $f$ on $\overline{\D}$ follows from the control on the growth of the derivatives of $G$ and $u$. In the case of interpolation in $A^\alpha$ adjustments must be made. (See Bruna-Tugores \cite{brunatu}*{Section III} for a helpful statement.)  In outline, though, Dyn{\textsc{\char13}}kin's argument is similar to the above. The main difficulty lies in finding the function we have called $G$. Dyn{\textsc{\char13}}kin constructs an outer function with the required regularity, order of vanishing on $E$, and control on the derivatives. We omit the details of his careful construction. Bruna \cite{brunaint} gives an alternative proof of this result that also covers other cases (see below). He constructs the outer function by using the function $\log d_E$, which he proves to be of bounded mean oscillation when $E$ is porous.

As we have seen,  Dyn{\textsc{\char13}}kin characterized boundary interpolation sets for $A^\alpha$  as  porous sets when $0<\alpha<1$.  Several other settings for interpolation have been studied. Dyn{\textsc{\char13}}kin's original paper \cite{dynkin} considered, for $\alpha>0$ not integral, the space of functions whose first $\lfloor \alpha\rfloor$ derivatives belong to $A$ and for which the derivative of order $\lfloor \alpha\rfloor$ satisfies a Lipschitz condition of order $\alpha-\lfloor \alpha\rfloor$. (We are using the floor, or greatest integer, function $\lfloor \alpha \rfloor$.) He characterized as porous the boundary interpolation sets for these spaces. (Here, and in what follows, derivatives are interpolated as well as the function.)
Bruna \cite{brunaint} extended this result to the case when $\alpha$ is a positive integer: Porosity characterizes the boundary interpolation sets for the space of functions whose derivatives through order $\alpha$ belong to $A$. Bruna and Tugores \cite{brunatu} proved that the same characterization holds for $A^1$.

The characterization is different for the space $A^\infty$ of functions whose derivatives of all orders belong to $A$. Alexander, Taylor, and Williams \cite{atw} proved that the boundary interpolation sets for $A^\infty$ are characterized by the following condition: There exist constants $C_1$ and $C_2$ such that, for every arc $I\subset \T$, $$\frac{1}{|I|}\int_I \log d_E^{-1}(\zeta) \; dm(\zeta) \leq C_1 \log\frac{1}{|I|}+C_2.$$ Their proof involves formulating an equivalent dual problem expressed in terms of distributions. Bruna \cite{brunaint} gave a constructive proof of this result using some of the ideas from his solution of the interpolation problem for finitely many derivatives. He proved that porosity is equivalent to having the Alexander-Taylor-Williams condition hold with $C_1=1$. Thus, every  porous set is a boundary interpolation set for $A^\infty$, but (as Bruna shows by giving an example) the converse is false.

Dyn{\textsc{\char13}}kin and Hru\v{s}\v{c}\"{e}v \cite{dynhru} studied interpolation for certain subclasses of $A^\infty$, including the  Gevrey class $G_\alpha$ (for $0<\alpha < 1$) defined in Section \ref{section:lip}.
They characterized the boundary interpolation sets $E$ for $G_\alpha$ as those satisfying the following condition: For all arcs $I\subset \T$,
\begin{equation} \label{eq:kalpha} \int_I d_E^{-\alpha}(\zeta) \; dm(\zeta) \leq C |I|^{1-\alpha}. \end{equation}
Following Dyn{\textsc{\char13}}kin \cite{dynkin}*{page 103}, we denote this condition by (K$_\alpha$).
Dyn{\textsc{\char13}}kin noted  that, for every $\alpha$, (K$_\alpha$) implies porosity, and he proved that if $E$ is porous then for some $\alpha$ it satisfies (K$_\alpha$). Thus, $E$ is a boundary  interpolation set for some (or all) $A^\alpha$ if and only if it is a boundary  interpolation set for some $G_\alpha$. It is unclear why the two problems are so closely related.

We mention interpolation results for some other function spaces. Dyn{\textsc{\char13}}kin's original paper \cite{dynkin} proved that  porosity characterizes boundary interpolation sets for elements of $A$ having derivatives in a Hardy space, the setting in which Koto\v{c}igov introduced this condition. In relation to Lipschitz classes, more recent results  (most of which are stated in terms of a modulus of continuity) are  Koto\v{c}igov \cite{kotocigov2}, Vasin \cites{vasin,vasin2}, and Mac\'{\i}a-Tugores \cite{mactug}.

For results about boundary interpolation sets for Lipschitz classes in $\Cn$, see Bruna-Ortega \cite{bruort}, Chaumat-Chollet \cite{chacho}, Saerens \cite{saer}, and Verdoucq \cite{verd}.

\vspace{.2in}

\section{The disc algebra $A$}\label{section:discalg}

\vspace{.2in}

In the case of the disc algebra $A$ both the peak sets and the boundary interpolation sets have a simple characterization: They are the closed subsets of $\T$ with Lebesgue measure zero. In fact, techniques from the theory of uniform algebras (closed subalgebras of the space of continuous functions on a compact Hausdorff space) give interesting alternative characterizations of these sets. In this section we state these results. Because they are well known, we will be fairly brief.

The case of peak sets was settled over a hundred years ago by Fatou \cite{fatou}*{page 393}, who constructed what we are calling a strong support function in $A$ for each closed subset $E$ of $\T$ with measure zero. Briefly, one begins with an integrable function $\psi$ that is smooth on $\T\setminus E$, continuous on $\T$ as a map to $[1,\infty]$, and equal to $\infty$ on $E$. If $u$ is the Poisson integral of $\psi$ and $v$ is the conjugate function, then $1/(u+iv)$ extends to an element of $A$ that is a strong support function for $E$.

Rudin \cite{rudin} and Carleson \cite{carlz} proved the following result on interpolation for the disc algebra.

\begin{thm}[Rudin-Carleson Theorem]\label{thm:r-c}
 Every closed subset of $\T$ with measure zero is an interpolation set for $A$.
\end{thm}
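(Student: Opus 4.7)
The plan is to combine Fatou's peak-function construction (just quoted) with the F.~and M.~Riesz theorem via a duality argument for density, followed by a peak-function iteration to achieve exact interpolation. Fix a closed set $E\subset\T$ with $|E|=0$ and a target function $\phi\in C(E)$.

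First I would show that the restriction map $R\colon A\to C(E)$, $f\mapsto f|_E$, has dense image. By Hahn--Banach it suffices to show that the only bounded linear functional on $C(E)$ annihilating $R(A)$ is zero. Such a functional is given by a complex Borel measure $\nu$ on $E$; extending $\nu$ by zero to a measure on $\T$, the hypothesis reads $\int_\T f\,d\nu=0$ for every $f\in A$, and in particular $\int_\T \zeta^n\,d\nu(\zeta)=0$ for every $n\geq 0$. The (measure-theoretic) F.~and M.~Riesz theorem then forces $\nu$ to be absolutely continuous with respect to Lebesgue measure on $\T$; since $\nu$ is also supported on the null set $E$, we conclude $\nu=0$.

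Second, I would promote density to a norm-controlled approximation using the peak function $h\in A$ provided by Fatou's construction, which satisfies $h\equiv 1$ on $E$ and $|h|<1$ on $\overline{\D}\setminus E$. The claim is: for every $\epsilon>0$ there exists $g\in A$ with $\|g|_E-\phi\|_{C(E)}<\epsilon$ and $\|g\|_\infty\leq \|\phi\|_{C(E)}+\epsilon$. To prove it, use density to pick $\tilde g\in A$ with $\|\tilde g|_E-\phi\|_{C(E)}<\epsilon/2$; by uniform continuity of $\tilde g$ on $\overline{\D}$ there is an open neighborhood $U$ of $E$ in $\overline{\D}$ on which $|\tilde g|\leq \|\phi\|_{C(E)}+\epsilon$. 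On the compact set $\overline{\D}\setminus U$ one has $|h|\leq 1-\eta$ for some $\eta>0$, so choosing $k$ with $(1-\eta)^k\|\tilde g\|_\infty\leq \|\phi\|_{C(E)}+\epsilon$ makes $g:=h^k\tilde g\in A$ satisfy both inequalities, since $h^k\equiv 1$ on $E$ preserves the approximation there.

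A standard geometric iteration then delivers $f\in A$ with $f|_E=\phi$ exactly. Set $\phi_0=\phi$; applying the claim to each $\phi_n$ with tolerance $2^{-n-1}\|\phi_0\|_{C(E)}$ yields $g_n\in A$ with $\|g_n\|_\infty\lesssim \|\phi_n\|_{C(E)}$ and with $\phi_{n+1}:=\phi_n-g_n|_E$ satisfying $\|\phi_{n+1}\|_{C(E)}\leq 2^{-n-1}\|\phi_0\|_{C(E)}$. Then $f=\sum_n g_n$ converges in $A$ and restricts to $\phi$ on $E$. The main obstacle is the norm-controlled approximation step, which is what upgrades abstract density into a constructive interpolation. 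The peak function is the decisive tool: its high powers preserve boundary values on $E$ exactly while decaying uniformly on any compact subset of $\overline{\D}\setminus E$, allowing one to absorb the possibly large sup-norm of $\tilde g$ away from $E$.
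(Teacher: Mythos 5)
Your proof is correct, but it follows a genuinely different route from the one sketched in the paper. The paper presents Rudin's original argument: first interpolate the two-valued simple function taking $0$ on $E_0$ and $1$ on $E_1$ (for a disjoint decomposition $E=E_0\cup E_1$) by forming $g_0/(g_0+g_1)$ from Fatou strong support functions $g_j$, with square roots taken so that $|\arg g_j|<\pi/4$ off $E_j$, which gives the crucial bound $0\le \operatorname{Re}g\le 1$; then pass to a general $\phi\in C(E)$ by uniform approximation with simple functions on the totally disconnected set $E$, using a conformal map together with that bound to make the resulting series of interpolants converge in $A$. Your argument is instead the functional-analytic one associated with Bishop (which the paper attributes to \cite{bishop} and \cite{glick} in Section 6 but does not carry out): density of $A|_E$ in $C(E)$ via Hahn--Banach and the F.~and~M.~Riesz theorem; a norm-controlled approximation lemma obtained by multiplying an approximate interpolant $\tilde g$ by a high power $h^k$ of a single Fatou peak function to damp $\tilde g$ away from $E$ while leaving it unchanged on $E$; and finally a geometric iteration summing the corrections. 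Each route has its own merits. The paper's is more constructive and does not invoke the F.~and~M.~Riesz theorem or duality. Yours is shorter once one accepts F.~and~M.~Riesz, comes with automatic norm control $\|f\|_\infty\lesssim\|\phi\|_{C(E)}$ (in fact the constant can be pushed to $1$, recovering the peak-interpolation statement), and is the template that generalizes to arbitrary uniform algebras satisfying the appropriate analogue of F.~and~M.~Riesz, which is exactly the direction the paper points to at the end of Section 6.
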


\begin{proof}[Sketch of proof] First we consider the case when the given set $E$ is the disjoint union of the closed sets $E_0$ and $E_1$ and the function $\phi$ to be interpolated is the simple function that is $0$ on $E_0$ and $1$ on $E_1$. For each $j$ let $g_j\in A$ be a strong support function for $E_j$. By taking square roots we can require that for each $j$ we have $|\mbox{arg }g_j|<\pi/4$ on $\overline{\D}\setminus E_j$. Define $g\in A$ by $g=g_0/(g_0+g_1)$. Then $g$ interpolates $\phi$. Note the bounds $0\leq \mbox{Re }g\leq 1$.

In the general case, because $E$ is totally disconnected, each continuous function on $E$ can be approximated uniformly by continuous simple functions on $E$. Then interpolation on $E$ can be accomplished by adapting the special case described in the preceding paragraph, where by means of a conformal map we make use of the bounds to ensure convergence.
\end{proof}

\vspace{.1in}

Danielyan \cite{daniel} gives an elementary proof that Fatou's result implies the Rudin-Carleson theorem.

Now we turn to alternative characterizations of the closed subsets of $\T$ with Lebesgue measure zero.
Rudin's proof, sketched above, shows in fact that every closed subset $E$ of $\T$ with measure zero is a peak-interpolation set (as in Section \ref{section:prelim}) for $A$: Given a continuous function $\phi$ on $E$ that is not identically zero, there exists $f\in A$ such that $f(z)=\phi(z)$ when $z\in E$ and $|f(z)|<\max\{|\phi(w)|\colon w\in E\}$ when
$z\in \overline{\D}\setminus E$. Of course, in general every peak-interpolation set is both a peak set and an interpolation set.

The results above on peak sets and interpolation sets are closely related to the theorem of F. and M. Riesz \cite{fmriesz} on absolute continuity of certain measures. That classical theorem can be reformulated as follows: Let $\mu$ be a complex Borel measure on $\T$ that annihilates $A$, in the sense that $\int f \; d\mu=0$ for every $f\in A$. Then each closed subset of $\T$ with Lebesgue measure zero is a null set for $\mu$. This theorem follows from Fatou's result on peak sets: Let $\mu$ be a complex Borel measure on $\T$ that annihilates $A$, and let $F$ be a closed subset of $\T$ with Lebesgue measure zero. Given a closed subset $E$ of $F$, find a peak function $f\in A$ for $E$. Then the sequence $\{f^n\}$ tends pointwise on $\T$ to the characteristic function of $E$, and $|f^n|\leq 1$ for all $n$. Thus, by the dominated convergence theorem $$\mu(E)=\lim\int f^n \; d\mu=0,$$ where the last equality holds since $\mu$ annihilates $A$. Hence, $F$ is a null set for $\mu$, as desired. This is essentially the proof in \cite{fmriesz}.

In the reverse direction, Bishop \cite{bishop}
(see also Glicksberg \cite{glick})
proved that the Rudin-Carleson theorem follows from the theorem of F. and M. Riesz. In fact, he considered the general setting of peak interpolation in closed subspaces of the space of continuous functions on a compact Hausdorff space. Since these results were proved, techniques from uniform algebras have been used to prove that for a variety of domains $\Omega$ in $\Cn$ the properties we have considered are equivalent for the space $A(\Omega)$ of functions holomorphic  on $\Omega$ and continuous up to the boundary. For example, if $E$ is a closed subset of the boundary of a bounded strictly pseudoconvex domain with smooth boundary (or a closed subset of the torus, the distinguished boundary of the polydisc), then the following five properties of $E$ are equivalent: zero set, peak set, interpolation set, peak-interpolation set, null set for every complex Borel measure on the boundary (or distinguished boundary) that annihilates $A(\Omega)$. For further details and references, see Rudin's books \cite{rupoly}*{Chapter 6} and \cite{ruball}*{Chapter 10}.

 Pe{\l}czy\'{n}ski \cite{pelc} used this work of Bishop to prove that boundary interpolation for $\T$ (and in more general settings) can be done using a linear isometry from the Banach space of continuous functions on $E\subset \T$ to $A$. See also Michael-Pe{\l}czy\'{n}ski \cite{mipel}.

 \vspace{.2in}

\section{Comparison of peak sets and boundary interpolation sets}\label{section:misc}

\vspace{.2in}

We have seen that, among closed subsets of $\T$, the zero sets, peak sets, interpolation sets, and peak-interpolation sets for the disc algebra coincide: They are the sets having Lebesgue measure zero.
This result fails for the other function spaces we have considered. For example, the peak sets for $A^\infty$ are finite, but the boundary interpolation sets are characterized by the Alexander-Taylor-Williams condition.

The comparison of peak sets and boundary interpolation sets for $A^\alpha$, $0<\alpha<1$, is particularly interesting. The characterization of boundary interpolation sets as porous is independent of $\alpha$, but the collection of peak sets for $A^\alpha$ depends on $\alpha$.
Bruna \cite{brunapk}*{pages 270--271} illustrated the situation with two examples: If $E=\{1\}\cup\{\exp(i/n)\colon n=1, 2, \ldots \}$, then $E$ is a peak set for $A^\alpha$ when $\alpha < 1/2$; but $E$ is not porous, so  it is an interpolation set for no $A^\alpha$. The middle-thirds Cantor set (considered as a subset of $\T$) is porous, so it is an interpolation set for each $A^\alpha$; but it is a not a peak set for $A^\alpha$ when $\alpha \geq 1-\log 2/\log 3$.

Bruna \cite{brunapk} showed that every boundary interpolation set for $A^\beta$ is a peak set for some $A^\alpha$. This is a consequence of his result that every set satisfying the condition (K$_\alpha$) defined by inequality \eqref{eq:kalpha} is a peak set for $A^\alpha$. (In fact, he shows that $E$ satisfies (K$_\alpha$)  if and only if $E$ is a strong peak set for $A^\alpha$, in the sense that $E$ has a strong support function $g\in A^\alpha$ such that $\mbox{Re }g \geq Cd_E^\alpha$ on $\overline{\D}$.) As we noted earlier, every porous set satisfies (K$_\alpha$) for some $\alpha$. Thus, every porous set is a peak set for some $A^\alpha$.
It is an open problem to characterize those porous sets that are peak sets for a given $A^\alpha$.

\def\MR#1{\relax\ifhmode\unskip\spacefactor3000 \space\fi%
  \href{http://www.ams.org/mathscinet-getitem?mr=#1}{MR#1}}


\begin{bibdiv}
\begin{biblist}

\bib{abab}{article}{
    author={Ababou-Boumaaz, Rachida}
    title={Ensembles de z\'{e}ros et ensembles pics pour des classes
    de fonctions holomorphes dans des domaines strictement pseudoconvexes},
    journal={C. R. Acad.\ Sci.\ Paris S\'{e}r.\ I Math.},
    volume={302},
    date={1986},
    pages={507--510},
    issn={0249-6291},
    review={\MR{0844153}},
}

\bib{ahlfors}{book}{
     author={Ahlfors, Lars V.},
   title={Conformal invariants: topics in geometric function theory},
   publisher={McGraw-Hill Book Co., New York},
   date={1973},
   pages={ix+157},
   review={\MR{0357743}},
   }

\bib{atw}{article}{
   author={Alexander, H.},
   author={Taylor, B. A.},
   author={Williams, D. L.},
   title={The interpolating sets for $A^\infty$},
   journal={J. Math.\ Anal.\ Appl.},
   volume={36},
   date={1971},
   pages={556--566},
   issn={0022-247X},
   review={\MR{0288296}},
}

\bib{belo}{book}{
     author={Bergh, J\"oran},
     author={L\"ofstr\"om, J\"orgen},
   title={Interpolation spaces. An introduction},
   series={Grundlehren der Mathematischen Wissenschaften},
   volume={223},
   publisher={Springer-Verlag, Berlin},
   date={1976},
   pages={x+207},
   isbn={3-540-07875-4},
   review={\MR{0482275}},
   }

\bib{beurling}{article}{
   author={Beurling, Arne},
   title={Ensembles exceptionnels},
   journal={Acta Math.},
   volume={72},
   date={1940},
   pages={1--13},
   issn={0001-5962},
   review={\MR{0001370}},
}

\bib{bishop}{article}{
   author={Bishop, Errett},
   title={A general Rudin-Carleson theorem},
   journal={Proc.\ Amer.\ Math.\ Soc.},
   volume={13},
   date={1962},
   pages={140--143},
   issn={0002-9939},
   review={\MR{0133462}},
}

\bib{bomash}{article}{
   author={Bomash, G. {Ya}.},
   title={Peak sets for analytic H\"older classes},
   journal={Zap.\ Nau\v{c}n.\ Sem.\ Leningrad.\ Otdel.\ Mat.\ Inst.\ Steklov. (LOMI)},
   volume={157},
   date={1987},
   pages={129--136},
   translation={
   journal={J. Soviet Math.},
   volume={44},
   date={1989},
   pages={837--842}
    },
   issn={0373-2703},
   review={\MR{0899281}},
}

\bib{brunaint}{article}{
   author={Bruna, Joaquim},
   title={Boundary interpolation sets for holomorphic functions smooth to the boundary and BMO},
   journal={Trans.\ Amer.\ Math.\ Soc.},
   volume={264},
   date={1981},
   pages={393--409},
   issn={0002-9947},
   review={\MR{0603770}},
}

\bib{brunapk}{article}{
   author={Bruna, Joaquim},
   title={On the peak sets for holomorphic Lipschitz functions},
   journal={Indiana Univ.\ Math.\ J.},
   volume={32},
   date={1983},
   pages={257--272},
   issn={0022-2518},
   review={\MR{0690189}},
}

\bib{bruort}{article}{
   author={Bruna, Joaquim},
   author={Ortega, Joaqu\'{\i}n Ma.},
   title={Interpolation by holomorphic functions smooth to the boundary in the unit ball of $\Cn$},
   journal={Math.\ Ann.},
   volume={274},
   date={1986},
   pages={527--575},
   issn={0025-5831},
   review={\MR{0848501}},
}

\bib{brunatu}{article}{
   author={Bruna, Joaquim},
   author={Tugores, Francesc},
   title={Free interpolation for holomorphic functions regular to the boundary},
   journal={Pacific J. Math.},
   volume={108},
   date={1983},
   pages={31--49},
   issn={0030-8730},
   review={\MR{0709698}},
}

\bib{carlzlip}{article}{
   author={Carleson, Lennart},
   title={Sets of uniqueness for functions regular in the unit circle},
   journal={Acta Math.},
   volume={87},
   date={1952},
   pages={325--345},
   issn={0001-5962},
   review={\MR{0050011}},
}

\bib{carlz}{article}{
   author={Carleson, Lennart},
   title={Representations of continuous functions},
   journal={Math.\ Z.},
   volume={66},
   date={1957},
   pages={447--451},
   issn={0025-5874},
   review={\MR{0084035}},
}

\bib{casc}{article}{
   author={Cascante, Carme},
   title={On the peak sets for Lipschitz functions on transverse curves},
   journal={Boll.\ Un.\ Mat.\ Ital.\ A (7)},
   volume={7},
   date={1993},
   pages={1--6},
   review={\MR{1215093}},
}

\bib{cavnov}{article}{
   author={Caveny, D. J.},
   author={Novinger, W. P.},
   title={Boundary zeros of functions with derivative in $H^p$},
   journal={Proc.\ Amer.\ Math.\ Soc.},
   volume={25},
   date={1970},
   pages={776--780},
   issn={0002-9939},
   review={\MR{0259134}},
}

\bib{chacho}{article}{
   author={Chaumat, Jacques},
   author={Chollet, Anne-Marie},
   title={Dimension de Hausdorff des ensembles de z\'{e}ros et d'interpolation pour $A^\infty(D)$},
   journal={Trans.\ Amer.\ Math.\ Soc.},
   volume={299},
   date={1987},
   pages={95--114},
   issn={0002-9947},
   review={\MR{0869401}},
}

\bib{daniel}{article}{
   author={Danielyan, Arthur A.},
   title={Fatou's interpolation theorem implies the Rudin-Carleson theorem},
   journal={J. Fourier Anal.\ Appl.},
   volume={23},
   date={2017},
   pages={656--659},
   issn={1069-5869},
   review={\MR{3649474}},
}

\bib{dynkin}{article}{
   author={Dyn{\textsc{\char13}}kin, E. M.},
   title={Free interpolation sets for H\"older classes},
   journal={Mat.\ Sb.\ (N.S.)},
   volume={109(151)},
   date={1979},
   number={1},
   pages={107--128, 166},
   translation={
   journal={Math.\ USSR-Sb.},
   volume={37},
   date={1980},
   pages={97--117}
    },
   issn={0368-8666},
   review={\MR{0538552}},
}

\bib{dynkinpk}{article}{
   author={Dyn{\textsc{\char13}}kin, E. M.},
  title={Peak sets for Lipschitz classes},
   journal={Zap.\ Nau\v{c}n.\ Sem.\ Leningrad.\ Otdel.\ Mat.\ Inst.\ Akad.\ Nauk SSSR},
   volume={81},
   date={1978},
   translation={
   journal={J. Soviet Math.},
   volume={26},
   date={1984},
   pages={2272--2273}
    },
   issn={0373-2703},
   reprint={
   title={Linear and complex analysis problem book: 199 research problems},
   series={Lecture Notes in Mathematics},
   editor={Havin, V. P.},
   editor={Hru\v{s}\v{c}\"{e}v, S. V.},
   editor={Nikol{\textsc{\char13}}ski\u{\i}, N. K.},
   volume={1043},
   date={1984},
       publisher={Springer-Verlag, Berlin},
    pages={xviii+719},
   isbn={0387128697},
   review={\MR{0734178}},
}
   }

\bib{dynhru}{article}{
   author={Dyn{\textsc{\char13}}kin, E. M.},
   author={Hru\v{s}\v{c}\"{e}v, S. V.},
   title={Interpolation by analytic functions smooth up to the boundary},
   journal={Zap.\ Nau\v{c}n.\ Sem.\ Leningrad.\ Otdel.\ Mat.\ Inst.\ Steklov. (LOMI)},
   volume={56},
   date={1976},
   pages={59--72},
   translation={
   journal={J. Soviet Math.},
   volume={14},
   date={1980},
   pages={1066--1077}
    },
   issn={0373-2703},
   review={\MR{0477063}},
}

\bib{erikke}{article}{
   author={\"Erikke (B. J\"oricke), B.},
   title={Peak sets for H\"older classes (a counterexample to E. M. Dyn{\textsc{\char13}}kin's conjecture)},
   journal={Zap.\ Nau\v{c}n.\ Sem.\ Leningrad.\ Otdel.\ Mat.\ Inst.\ Steklov. (LOMI)},
   volume={157},
   date={1987},
   pages={45--54},
   translation={
   journal={J. Soviet Math.},
   volume={44},
   date={1989},
   pages={778--785}
    },
   issn={0373-2703},
   review={\MR{0899273}},
}

\bib{fadpav}{article}{
   author={Faddeev, L. D.},
   author={Pavlov, B. S.}
   title={Null-sets of operator functions with a positive imaginary part},
   journal={Zap.\ Nau\v{c}n.\ Sem.\ Leningrad.\ Otdel.\ Mat.\ Inst.\ Akad.\ Nauk SSSR},
   volume={81},
   date={1978},
   pages={85--88},
   translation={
   journal={J. Soviet Math.},
   volume={26},
   date={1984},
   pages={2144--2146}
    },
   issn={0373-2703},
   reprint={
   title={Linear and complex analysis problem book: 199 research problems},
   series={Lecture Notes in Mathematics},
   editor={Havin, V. P.},
   editor={Hru\v{s}\v{c}\"{e}v, S. V.},
   editor={Nikol{\textsc{\char13}}ski\u{\i}, N. K.},
   volume={1043},
   date={1984},
       publisher={Springer-Verlag, Berlin},
    pages={xviii+719},
   isbn={0387128697},
   review={\MR{0734178}},
}
   }

\bib{fatou}{article}{
   author={Fatou, Pierre},
   title={S\'eries trigonom\'etriques et s\'eries de Taylor},
   journal={Acta Math.},
   volume={30},
   date={1906},
   pages={335--400},
   issn={0001-5962},
   review={\MR{1555035}},
   }

\bib{fost}{book}{
author={Forn{\ae}ss, John Erik},
author={Stens{\o}nes, Berit},
title={Lectures on counterexamples in several complex variables},
series={Mathematical Notes},
volume={33},
date={1987},
publisher={Princeton University Press, Princeton, NJ},
 pages={viii+248},
isbn={0691084564},
review={\MR{0895821}},
}

\bib{garn}{book}{
author={Garnett, John B.},
title={Bounded analytic functions},
series={Graduate Texts in Mathematics},
volume={236},
date={2007},
edition={revised first edition},
publisher={Springer, New York},
 pages={viv+459},
isbn={0387336214},
review={\MR{2261424}},
}

\bib{glick}{article}{
   author={Glicksberg, Irving},
   title={Measures orthogonal to algebras and sets of antisymmetry},
   journal={Trans.\ Amer.\ Math.\ Soc.},
   volume={105},
   date={1962},
   pages={415--435},
   issn={0002-9947},
   review={\MR{0173957}},
}

\bib{graf}{book}{
author={Grafakos, Loukas},
title={Classical Fourier analysis},
series={Graduate Texts in Mathematics},
volume={249},
date={2014},
edition={3},
publisher={Springer, New York},
 pages={xvii+638},
isbn={1493911945},
review={\MR{3243734}},
}

\bib{hamilton}{article}{
   author={Hamilton, D. H.},
   title={Conformal distortion of boundary sets},
   journal={Trans.\ Amer.\ Math.\ Soc.},
   volume={308},
   date={1988},
   pages={69--81},
   issn={0002-9947},
   review={\MR{0946430}},
}

\bib{harwel}{article}{
   author={Harvey, F. Reese},
   author={Wells, R. O., Jr.},
   title={Holomorphic approximation and hyperfunction theory on a $C^1$ totally real submanifold of a complex manifold},
   journal={Math.\ Ann.},
   volume={197},
   date={1972},
   pages={287--318},
   issn={0025-5831},
   review={\MR{0310278}},
}

\bib{hruscev}{article}{
   author={Hru\v{s}\v{c}\"{e}v, S. V.},
   title={Sets of uniqueness for the Gevrey classes},
   journal={Ark.\ Mat.},
   volume={15},
   date={1977},
   pages={253--304},
   issn={0004-2080},
   review={\MR{0463443}},
   }

\bib{hutt}{thesis}{
    author={Hutt, Howard},
    title={Some results on peak and interpolation sets of analytic functions with high regularity},
    type={Ph.D. Dissertation},
    organization={Dept.\ Math., Uppsala Univ.},
    date={1975},
    }

\bib{koos}{book}{
author={Koosis, Paul},
title={Introduction to $H_p$ spaces},
series={Cambridge Tracts in Mathematics},
volume={115},
date={1998},
edition={2},
contribution={
type={two appendices},
author={Havin, V. P.}},
publisher={Cambridge University Press, Cambridge},
 pages={xiv+289},
isbn={0521455219},
review={\MR{1669574}},
}

\bib{koro}{article}{
   author={Korolevi\v{c}, V. S.},
   title={On a theorem of Beurling and Carleson},
   journal={Ukrain.\ Mat.\ \v{Z}.},
   volume={22},
   date={1970},
   pages={823--828},
   translation={
   journal={Ukrainian Math.\ J.},
   volume={22},
   date={1970},
   pages={710--714}
   },
   issn={0041-5995},
   review={\MR{0289785}},
}

\bib{kotocigov}{article}{
   author={Koto\v{c}igov, A. M.},
   title={Interpolation of analytic functions smooth to the boundary},
   journal={Zap.\ Nau\v{c}n.\ Sem.\ Leningrad.\ Otdel.\ Mat.\ Inst.\ Steklov. (LOMI)},
   volume={30},
   date={1972},
   pages={167--169},
   translation={
   journal={J. Soviet Math.},
   volume={4},
   date={1975},
   pages={448--449}
    },
   issn={0373-2703},
   review={\MR{0338384}},
}

\bib{kotocigov2}{article}{
   author={Koto\v{c}igov, A. M.},
   title={Structure of free interpolation sets for analytic function spaces determined by a continuity modulus},
   journal={Zap.\ Nau\v{c}n.\ Sem.\ S-Peterburg.\ Otdel.\ Mat.\ Inst.\ Steklov. (POMI)},
   volume={290},
   date={2002},
   pages={122--137},
   translation={
   journal={J. Math.\ Sci.\ (N.Y.)},
   volume={124},
   date={2004},
   pages={4909--4917}
    },
   issn={0373-2703},
   review={\MR{1942539}},
}

\bib{krantz}{book}{
author={Krantz, Steven G.},
title={The theory and practice of conformal geometry},
series={Aurora Dover Mod.\ Math Orig.},
date={2016},
publisher={Dover Publications, Mineola, NY},
 pages={xii+285},
isbn={0486793443},
review={\MR{3467899}},
}

\bib{lef}{article}{
   author={Lef\`{e}vre, Pascal},
   author={Li, Daniel},
   author={Queff\'{e}lec, Herv\'{e}},
   author={Rodr\'{\i}guez-Piazza, Luis}
   title={Compact composition operators on the Dirichlet space and capacity of sets of contact points},
   journal={J. Funct.\ Anal.},
   volume={264},
   date={2013},
   pages={895--919},
   issn={0022-1236},
   review={\MR{3004952}},
   }

\bib{mactug}{article}{
   author={Mac\'{\i}a, Benxam\'{\i}n},
   author={Tugores, Francesc},
   title={Weak interpolation for the Lipschitz class},
   journal={Math.\ Slovaca},
   volume={67},
   date={2017},
   pages={411--416},
   issn={0139-9918},
   review={\MR{3652279}},
   }

\bib{mipel}{article}{
   author={Michael, E.}, 
   author={Pe{\l}czy\'{n}ski, A.}, 
   title={A linear extension theorem},
   journal={Illinois J. Math.},
   volume={11},
   date={1967},
   pages={563--579},
   issn={0019-2082},
   review={\MR{0217582}},
 }

\bib{noell}{article}{
   author={Noell, Alan},
   title={Peak points for pseudoconvex domains: a survey},
   journal={J. Geom.\ Anal.},
   volume={18},
   date={2008},
   pages={1058--1087},
   issn={1050-6926},
   review={\MR{2438912}},
 }

\bib{nowol}{article}{
   author={Noell, Alan V.},
   author={Wolff, Thomas H.},
   title={On peak sets for Lip $\alpha$ classes},
   journal={J. Funct.\ Anal.},
   volume={86},
   date={1989},
   pages={136--179},
   issn={0022-1236},
   review={\MR{1013937}},
 }

\bib{novz}{article}{
   author={Novinger, W. P.},
     title={Holomorphic functions with infinitely differentiable boundary values},
   journal={Illinois J.\ Math.},
   volume={15},
   date={1971},
   pages={80--90},
   issn={0019-2082},
   review={\MR{0269861}},
 }

\bib{novobe}{article}{
   author={Novinger, W. P.},
   author={Oberlin, D. M.},
   title={Peak sets for Lipschitz functions},
   journal={Proc.\ Amer.\ Math.\ Soc.},
   volume={68},
   date={1978},
   pages={37--43},
   issn={0002-9939},
   review={\MR{0458178}},
 }

 \bib{pavlov}{article}{
   author={Pavlov, B. S.},
   title={A uniqueness theorem for functions with a positive imaginary part},
      book={
   series={Problems of Mathematical Physics, No.\ 4: Spectral Theory. Wave Processes, Izdat.\ Leningrad.\ Univ., Leningrad},
   date={1970},
     },
   pages={118--124},
      translation={
   book={
   title={Spectral Theory and Wave Processes},
   series={Topics in Mathematical Physics},
   volume={4},
   date={1971},
     publisher={Consultants Bureau, New York-London},
    },}
   review={\MR{0269845}},
}

 \bib{pelc}{article}{
     author={Pe{\l}czy\'{n}ski, A.}, 
   title={On simultaneous extension of continuous functions. A generalization of theorems of Rudin-Carleson and Bishop},
   journal={Studia Math.},
   volume={24},
   date={1964},
   pages={285--304},
   issn={0039-3223},
   review={\MR{0174996}},
 }

\bib{pomm}{book}{
     author={Pommerenke, Ch.}, 
   title={Boundary behaviour of conformal maps},
   series={Grundlehren der Mathematischen Wissenschaften},
   volume={299},
   publisher={Springer-Verlag, Berlin},
   date={1992},
   pages={x+300},
   isbn={3-540-54751-7},
   review={\MR{1217706}},
   }

\bib{fmriesz}{article}{
     author={Riesz, F. and M.},
   title={\"{U}ber die Randwerte einer analytischen Funktion},
   conference={
   title={Quatri\`{e}me Congr\`{e}s des Math.\ Scand.}
   address={Stockholm},
   date={1916}
   },
   pages={27--44},
  book={
  publisher={Almqvist \& Wiksells},
  address={Uppsala},
  date={1920}
  },
 }

 \bib{rudin}{article}{
     author={Rudin, Walter},
   title={Boundary values of continuous analytic functions},
   journal={Proc.\ Amer.\ Math.\ Soc.},
   volume={7},
   date={1956},
   pages={808--811},
   issn={0002-9939},
   review={\MR{0081948}},
 }

\bib{rupoly}{book}{
     author={Rudin, Walter},
   title={Function theory in polydiscs},
   publisher={W. A. Benjamin, Inc., New York-Amsterdam},
   date={1969},
   pages={vii+188},
   isbn={978-0-80-538350-8},
   review={\MR{0255841}},
   }

\bib{ruball}{book}{
     author={Rudin, Walter},
   title={Function theory in the unit ball of $\C^n$},
   series={Grundlehren der Mathematischen Wissenschaften},
   volume={241},
   publisher={Springer-Verlag, New York-Berlin},
   date={1980},
   pages={xiii+436},
   isbn={978-0-38-790514-3},
   review={\MR{0601594}},
   }

\bib{saer}{article}{
    author={Saerens, Rita},
    title={Interpolation theory in $\Cn$: a survey},
    conference={
    title={Complex analysis},
    address={University Park, Pa.},
    date={1986}
    },
    book={
    series={Lecture Notes in Mathematics},
   volume={1268},
   publisher={Springer},
   address={Berlin},
   date={1987}
   },
   pages={158–-188},
   review={\MR{0907059}}
   }

     \bib{taywilze}{article}{
     author={Taylor, B. A.},
     author={Williams, D. L.},
   title={Ideals in rings of analytic functions with smooth boundary values},
   journal={Canad.\ J. Math.},
   volume={22},
   date={1970},
   pages={1266--1283},
   issn={0008-414X},
   review={\MR{0273024}},
    }

  \bib{taywilpk}{article}{
     author={Taylor, B. A.},
     author={Williams, D. L.},
   title={The peak sets of $A^m$},
   journal={Proc.\ Amer.\ Math.\ Soc.},
   volume={24},
   date={1970},
   pages={604--606},
   issn={0002-9939},
   review={\MR{0255828}},
    }

   \bib{ullrichlon}{article}{
     author={Ullrich, David C.},
   title={Radial limits of Bloch functions in the unit disc},
   journal={Bull.\ London Math.\ Soc.},
   volume={18},
   date={1986},
   number={4},
   pages={374--378},
   issn={0024-6093},
   review={\MR{0838805}},
   }

   \bib{vasin}{article}{
   author={Vasin, A. V.},
   title={A necessary condition for interpolation by functions of the Lipschitz class},
   journal={Zap.\ Nau\v{c}n.\ Sem.\ S-Peterburg.\ Otdel.\ Mat.\ Inst.\ Steklov. (POMI)},
   volume={315},
   date={2004},
   pages={39--42},
   translation={
   journal={J. Math.\ Sci.\ (N.Y.)},
   volume={134},
   date={2006},
   pages={2258--2259}
    },
   issn={0373-2703},
   review={\MR{2114012}},
}

   \bib{vasin2}{article}{
   author={Vasin, A. V.},
   title={Boundary interpolation in weak Lipschitz classes},
   journal={Zap.\ Nau\v{c}n.\ Sem.\ S-Peterburg.\ Otdel.\ Mat.\ Inst.\ Steklov. (POMI)},
   volume={355},
   date={2008},
   pages={72--80},
   translation={
   journal={J. Math.\ Sci.\ (N.Y.)},
   volume={156},
   date={2009},
   pages={761--765}
    },
   issn={0373-2703},
   review={\MR{2744534}},
}

\bib{verd}{article}{
    author={Verdoucq, Laurent},
    title={Interpolation pour les fonctions de la classe $\mathcal{A}^{k,\alpha}$ dans les ellipso\"{i}des complexes de $\CC$},
    conference={
    title={Actes des Rencontres d'Analyse Complexe},
    address={Universit\'{e} de Poitiers, Poitiers-Futuroscope},
    date={1999}
    },
    book={
    series={},
   volume={},
   publisher={Atlantique},
   address={Poitiers},
   date={2002}
   },
   pages={165–-207},
   review={\MR{1944204}}
   }

    \bib{whitney}{article}{
     author={Whitney, Hassler},
        title={Analytic extensions of differentiable functions defined in closed sets},
   journal={Trans.\ Amer.\ Math.\ Soc.},
   volume={36},
   date={1934},
   pages={63--89},
   issn={0002-9947},
   review={\MR{1501735}},
    }

\end{biblist}
\end{bibdiv}

\end{document}